\newcommand{\inte}{\operatorname*{int}}
\newcommand{\aff}{\operatorname*{aff}}
\newcommand{\bd}{\operatorname*{bd}}
 \newcommand{\conv}{\operatorname*{conv}}
\newcommand{\Rd}{\mathbb{R}^2}
\newcommand{\Rt}{\mathbb{R}^3}
\newcommand{\Rn}{\mathbb{R}^{n}}
\newtheorem{lemma}{Lemma}
\newtheorem{theorem}{Theorem}
\newtheorem{corollary}{Corollary}
\newtheorem{remark}{Remark}
\newtheorem{definition}{Definition}
\newtheorem{claim}{Claim}
\newcommand{\fin}{\hfill $\Box$}
\title {Convex bodies with centrally symmetric sections}
\author{E. Morales-Amaya}
\address{Facultad de Matem\'aticas-Acapulco,
Universidad Aut\'onoma de Guerrero, M\'exico}
\email{13529@uagro.mx}
\begin{document}

 \begin{abstract}  
                  Let $K\subset \mathbb{R}^n$ be a convex body, $n\geq 3$. We say that $K$ 
                  satisfies the \textit{Barker-Larman condition} if there exists a ball 
                  $B\subset \text{int} K$ such that for every suppor\-t hyperplane $\Pi$ of $B$, 
                  the section $\Pi \cap K$ is a centrally symmetric set. 
                  
                   In \cite{Barker}, it was conjectured that the Barker-Larman condition characterizes the 
                   e\-llip\-soid. In this work, we prove a special case of such a conjecture; in particular, we 
                   assume that the convex body $K$ is centrally symmetric. Our main result is
                  the following: Let $K$ be a centrally symmetric and strictly convex body,    
                   with center at $O$, and let $B\subset \inte K$ be a ball not containing $O$: If $K$  
                   satisfies the Barker-Larman condition with respect to $B$ and $B$ is \textit{suitable} 
                   for $K$ (intuitively, $B$ is suitable for $K$ if $\bd B$ \textit{is not very close to} $\bd K$, 
                   see the definition in the Introduction), then $K$ is an ellipsoid. 
\end{abstract}
\maketitle

\section{Introduction}

                  To present our first result, we need the following definitions. 
                  Let $K\subset \mathbb{R}^n$ be a centrally symmetric convex body with center at 
                  $O$ and let $B \subset \inte K$ be a ball such that $O\notin B$. We take a system 
                  of coordinates for $\Rn$ such that $O$ is the origin. On the one hand, the ball $B$ is 
                  said to be \textit{suitable} for $K$ if, for every support hyperplane 
                  $\Gamma$ of   $G:=\conv[B\cup (-B)]$, the relation
\begin{eqnarray}\label{jazzu}
                  B\subset \conv (K_{\Gamma} \cup K_{-\Gamma} )  
\end{eqnarray} 
                  holds, where $K_{\Gamma}:=\Gamma \cap K $ and 
                  $K_{-\Gamma}:=-\Gamma \cap K $. On the other hand, we say that $K$ satisfies 
                  the \textit{Barker-Larman Condition} if there exists a ball $B\subset \text{int} K$ such 
                  that, for every support hyperplane $\Pi$ of $B$, the section $\Pi \cap K$ is 
                  centrally symmetric.    
 
                   We are now ready to present our main result.
                   
\begin{theorem}\label{mozart}
                  Let $K\subset \mathbb{R}^n$, $n\geq3 $, be a centrally symmetric and strictly 
                  convex body with center at $O$ and let $B\subset \inte K$ be a ball with $O\notin B$. 
                  Suppose that $K$ and $B$ are such that: 1) $K$ satisfies the Barker-Larman condition 
                  with respect to $B$, and 2) $B$ is suitable for $K$. Then $K$ is an ellipsoid.
\end{theorem}
                  Notice that if $K$ is a centrally symmetric convex body with center at $O$, and it 
                  satisfies the Barker-Larman condition with respect to the ball $B$, $O\notin B$, 
                  which is suitable for $K$, then the set $\conv (K_{\Gamma} \cup K_{-\Gamma})$ is 
                  a cylinder and  the condition (\ref{jazzu}) means that $B$ is contained in it for all 
                  support planes $\Gamma$ of $G$.

                     An ellipsoid $E\subset \Rn$ has the property that all its sections with $k$-planes, 
                     $2\leq k\leq n-1$, are ellipsoids. Therefore, its sections have symmetries depending on 
                     the context in which the ellipsoid is considered; that is, for example, for $k=n-1$, in 
                     the Euclidean case, the sections of $E$ with hyperplanes have centers of symmetry, 
                     axes of symmetry, and $(n-2)$-planes of symmetry (the formal definitions of these 
                     concepts will be presented in the next section). If we consider the affine context, the 
                     sections of $E$ with hyperplanes have centers of symmetry, affine axes of symmetry, 
                     and $(n-2)$-planes of affine symmetry (the formal definitions of these concepts will be 
                     presented in the next section). Finally, in the projective context, the sections of $E$ 
                     with hyperplanes, there are poles and polars (although these concepts will not be 
                     considered in this work, the interested reader may consult such notions and related 
                     results in \cite{Kelly},  \cite{MM1}, \cite{larmanmorales}). It is natural to ask, in the 
                     Euclidean context, 
                     
                     \textit{if we have a convex body $K\subset \Rn$, $n\geq 3$, such that all its 
                     hypersections have centers of symmetry; must $K$ be an ellipsoid?} 
                     
                     Naturally, the corresponding questions arise for the case of axes of symmetry and 
                     $(n-2)$-planes of symmetry (see \cite{mo}, \cite{gmm}, \cite{alfonseca}), and also 
                     the corresponding questions when mo\-ving to 
                     affine and projective contexts. Since we have a positive answer to the previous 
                     question (the corresponding theorem was first proved by Brunn \cite{Brunn} in 1889 under 
                     the hypothesis of regularity and was proved in general by Burton \cite{Burton}), 
                     we can try to 
                     reduce the number of sections of $K$ that have centers of 
                     symmetry and still guarantee that $K$ is an ellipsoid. This has been done in several 
                     ways. We will limit ourselves to presenting only two, which, in our opinion, are the 
                     significant ones (however, we also recommend seeing \cite{BG}, \cite{mm3}, 
                     \cite{LMM}). The first of this result is the following Olevjanischnikoff's 
                     Theorem \cite{Ol2} concerning homothetic ellipsoids.

\begin{theorem}[Olevjanischnikoff] \label{thmOle}
                     Let $K$ and $L$ be convex bodies in $\Rn$, with $L\subset \inte K$, such that every 
                     hypersection of $K$ tangent to $L$ is centrally symmetric and its center belongs to 
                     $\mbox{} L$, then $K$ and $L$ are homothetic and concentric ellipsoids.
\end{theorem}

                    The second one is the following result due to Rogers \cite{Rogers}:

\begin{theorem}[Rogers]
                    Let $K\subset\mathbb R^n$ be a convex body, $n\geq 3$, and 
                    $x\in\mathbb R^n$. If all the $2$-dimensional sections of $K$ through $x$ have a 
                    center of symmetry, then $K$ has a centre of symmetry.
\end{theorem}
                    Rogers also conjectured that if $x$ was not the center of $K$ then it is an ellipsoid. 
                    Aitchison, Petty, and Rogers first proved this conjecture \cite{false_centre} 
                    when $x \in \inte K$, and, in the general case, by Larman  \cite{larman}. Later, a simpler 
                    proof was given by Montejano and Morales-Amaya in \cite{Falso_MM}. Such a theorem 
                    is known as the False Centre Theorem.

                    Theorem~\ref{mozart} is a variant of the 
                    False Centre Theorem: When, instead of considering concurrent planes, we consider 
                    planes tangent to a given sphere. It can be considered as a progress on a problem 
                    due to Barker and Larman \cite{Barker}.  

                     \textbf{Structure of the paper.} The paper is organized as follows:
\begin{enumerate}
	\item [1.] Introduction (this section).
	\item [2.] Preliminaries.
        \item [3.] Proof of Theorem~\ref{mozart} for $n = 3$.
	\item [4.] Proof of Theorem \ref{mozart} in dimension $>3$. 
 \end{enumerate} 
                   Unfortunately, very few ideas from the False 
                   Centre Proof can be used in the proof of Theorem~\ref{mozart}. This is why it was 
                   necessary to implement new ideas to make some progress in proving the 
                   Barker-Larman Problem \cite{Barker}. It is worthwhile to give a brief description of 
                   the strategy for proving Theorem~\ref{mozart}.  In section 3, under the condition 
                   of Theorem~\ref{mozart} and assuming that the dimension is 3, 
                   we will prove: 1) the body $K$ has many planar shadow boundaries (see Lemmas 
                   \ref{star}, \ref{gato}, \ref{vivaldi}). 2) the body $K$ has many affine axes of symmetry. 
                   For a fixed affine axis of symmetry $L$, the planes containing $L$ define sections 
                   which are shadow boundaries of $K$, corresponding to directions parallel to some 
                   parallel support planes 
                   $\Pi_1, \Pi_2$
                   of $K$ at the points $L\cap \bd K$. This condition implies that all the sections of $K$, 
                   parallel to $\Pi_1$, are centrally symmetric with centers at $L$ and homothetic (see 
                   Lemma \ref{jamaica}). 
                   3) One of the affine axes of symmetry is, in fact, an affine axis of 
                   revolution (see Lemmas \ref{orchata}, \ref{limon}, \ref{tamarindo}). 4) $K$ has 
                   enough affine axes of symmetry to guarantee that the body $K$ is an ellipsoid.
                   
                   It is also worth noting that in \ref{preliminaries}. Preliminaries, we proved that 
                   the set of suitable balls contained in the ellipsoid $E_1\subset \Rn$, $n\geq 2$, 
                   and not containing $O$ is non-empty.
                  
\section{Preliminaries.}\label{preliminaries} 
                   Let $\mathbb{R}^{n}$ be the Euclidean space of dimension $n$ endowed with the 
                   usual inner pro\-duct 
                   $\langle \cdot, \cdot\rangle : \mathbb{R}^{n} \times \mathbb{R}^{n} \rightarrow 
                   \mathbb{R}$. We take an orthogonal system of coordinates $(x_1,...,x_{n} )$ for  
                   $\mathbb{R}^{n}$ and we denote by $O$ the origin. Let $\mathbb{S}^{n}=\{x\in 
                   \mathbb{R}^{n}: \|x\| = 1\}$ be the unit sphere. If $u\in \mathbb S^{n-1},$ then 
                   $u^\perp$ denotes the $(n-1)$-dimensional subspace orthogonal to $u$. Translations
                   of subspaces are called affine planes or $k$-planes.  Two planes are \emph{parallel} 
                   if one is contained in a translation of the other. Given a $k$-plane $H$, we denote 
                   by $H^\perp$ the $(n-k)$ subspace of $R^n$ orthogonal to $H$. If $C$ is a set 
                   $\text{aff }{C}$ denotes the smallest affine plane containing $C$. For the points 
                   $x,y \in \Rn$, we will denote by $L(x,y)$ the line defined by $x$ and $y$ and by 
                   $[x,y]$ the line segment contained in $L(x,y)$ with endpoints $x$ and $y$.   

                    A \textit{body} in $\Rn$ is a compact set which is equal to the closure of its 
                    nonempty interior. A \textit{convex body} is a body $K$ such that for every pair of 
                    points in $K$, the segment joining them is contained in $K$. A convex body is 
                    \textit{strictly convex} if its boundary does not contain a line segment. As usual 
                    $\inte K$, $\bd K$ will denote the \textit{interior} and the \textit{boundary} of the 
                    convex body $K$, respectively.

                    Given a $(n-1)$-plane $H\subset \Rn$ and a non-zero vector 
                    $u\in \Rn\backslash H$, the \textit{projection with respect to $u$ onto} $H$ is the 
                    linear transformation $\Phi_{H,u}:\Rn \to \Rn$ whose image $\Phi_{H,u}(\Rn)$ is 
                    $H$ and such that $\Phi_{H,u}^{-1}(p) =p+ \{\lambda u: \lambda \in \mathbb{R}\}$, 
                    for every $p\in H$. In the particular case when $u$ and $H$ are perpendicular this 
                    transformation will be denoted just by $\Phi_ u(\cdot)$. 

                    A set $C\subset \Rn$ is $O$-\emph {symmetric} if and only if $C=-C$.  Moreover, 
                    we say that $C$ is \emph{centrally symmetric} if there is a $O$-symmetric 
                    translated copy $C+w$ of $C$, i.e., $C=-C-2w$, that is, $C$ and $-C$ are translated. 
                    Notice that the converse is also true, i.e., if a set $W\subset \Rn$ and $-W$ are 
                    translated is because $W$ is centrally symmetric. 

                    For $n \geq 3$ we denote by $O(n)$ the \textit{orthogonal group}, i.e., the set of all 
                    the isometries of $\Rn$ that fix the origin. Let $K\subset  \Rn$ be a convex body, let 
                    $\Pi$ be an affine hyperplane, and $p$ be a point in $\Pi$. We denote by 
                    $O(\Pi,p,n-1)$ the set of all isometries of $\Pi$ that fix $p$. When it is clear which 
                    affine hyperplane $\Pi$ and point $p$ we are considering, we abuse the notation 
                    and write $O(n - 1)$ instead of $O(\Pi, p, n-1)$. The section $ \Pi\cap K$ is said to 
                    be \textit{symmetric} if there exists a non-trivial $\Omega \in O(n -1)$ such that 
\[
                    \Omega(\Pi \cap K) =  \Pi \cap K.
\]
\begin{definition}
                    Let $n \geq 3$, let $K\subset \Rn$ be a convex body, and let $L$ be a line passing 
                    through the origin. We denote by $R_L : \Rn \rightarrow \Rn$ the element of $O(n)$ 
                    that acts as the identity on $L$, and sends $x$ to $-x$ on the hyperplane 
                    $L^{\perp}$. The line $L$ is said to be an axis of symmetry of K if the following 
                    relation holds:
\[
                    R_L(K) = K.
\]
                    When the line $L$ does not pass through the origin, we abuse the notation and 
                    denote also by $R_L$ the function that acts as the identity on $L$, and sends 
                    $p + x$ to $p - x$ for every $p\in L$ and $x \in p + L^{\perp}$.
\end{definition}
\begin{remark}\label{soul} 
                    If $L$ is an axis of symmetry of $K$, then, on the one hand, all the sections of $K$ 
                    by hyperplanes orthogonal to $L$ are centrally symmetric with center at $L$; on 
                    the other hand, all the $2$-sections of $K$, by planes containing $L$, have $L$ as 
                    an axis of symmetry.
\end{remark} 
           The line $L$ is said to be an \textit{affine axis of symmetry} 
           (\textit{affine axis of revolution}) of $K$ if there exists an affine 
           transformation $T: \mathbb{A}^n \rightarrow \mathbb{A}^n$ such that 
           $T(L)$ is an axis of symmetry (axis of revolution) of $T(K)$, where 
           $\mathbb{A}^n$ is the real affine space of dimension $n$.
\begin{definition}
           Let $H \subset \mathbb{R}^n$ be a hyperplane. A mapping 
           $S : \mathbb{R}^n \to \mathbb{R}^n$ is a \emph{reflection} with respect to $H$ if, for every 
           point $x \in \mathbb{R}^n$, the point $S(x)$ lies on the line orthogonal to $H$ through $x$, 
           at equal distance from $H$, and on the opposite side of $H$ from $x$. A convex body 
           $K \subset \mathbb{R}^n$ is said to be \emph{symmetric} with respect to $S$ if 
\[
S(K) = K.
\] 
           In other words, $H$ is a hyperplane of symmetry of $K$.
\end{definition}
           
           The hyperplane $H$ is said to be an \textit{affine hyperplane of symmetry} 
            of $K$ if there exist an affine transformation $T: \mathbb{A}^n \rightarrow \mathbb{A}^n$ 
            such that $T(H)$ is a hyperplane of symmetry of $T(K)$.
\begin{figure}[H]\centering
\includegraphics [width=4.5in] {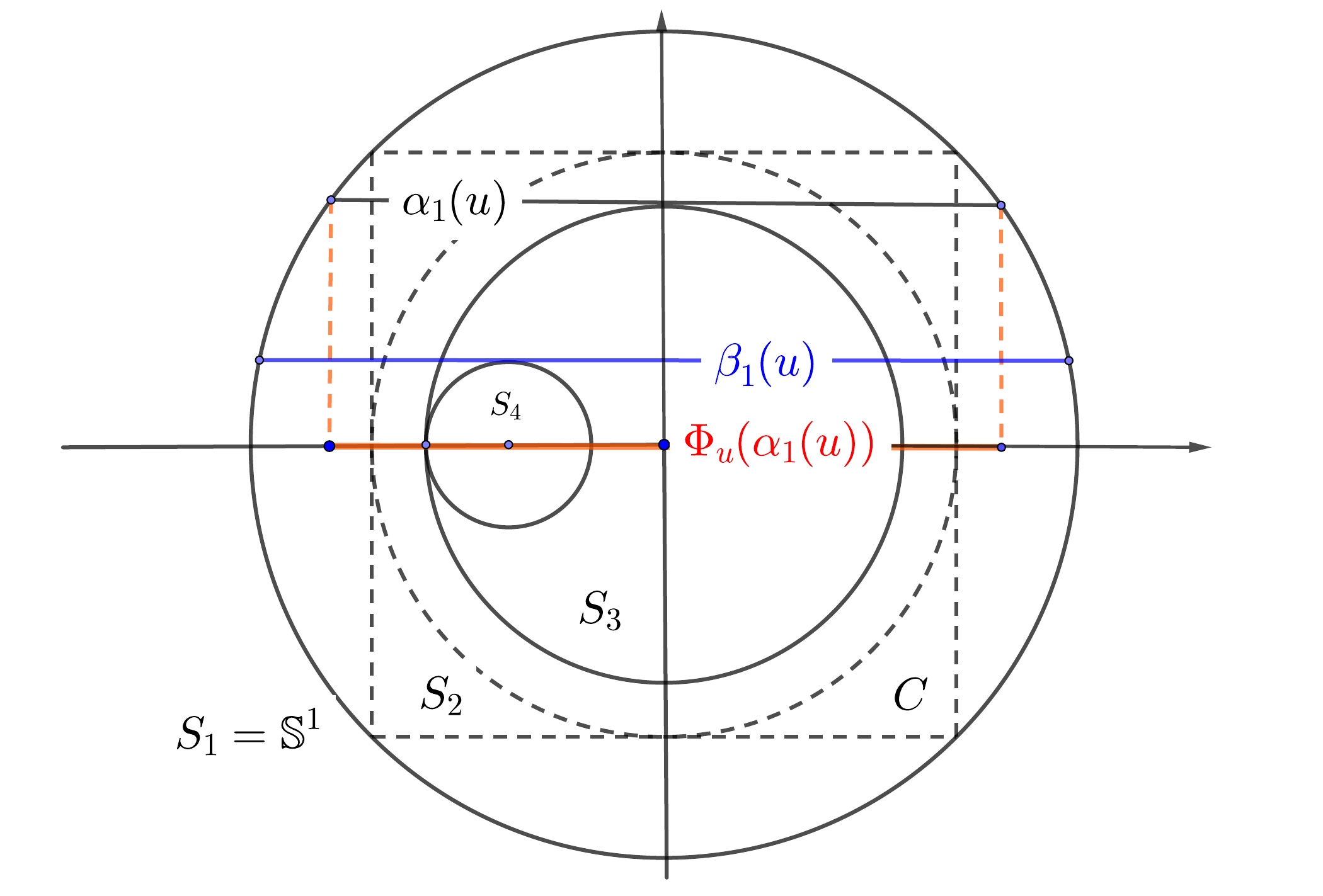} 
\caption{The circle $\mathbb{S}^1$ has suitable circle.}
\label{suit}
\end{figure}
           Let $S_1=\mathbb{S}^1$,  $C$ be a square inscribed in $S_1$ and let $S_2$ be a disc 
           of radius $\frac{1}{\sqrt{2}}$ with center at $O$. Notice that the disc $S_2$ is inscribed in 
           $C$. Let $S_3$ be a disc of radius $<\frac{1}{\sqrt{2}}$ centered at $O$ and let 
           $S_4\subset \inte S_3$ be a disc tangent to $S_3$ such that $O$ does not belong to 
           $S_4$. For $u\in S_1$, we denote by $\alpha_1(u)$, $\alpha_2(u)$ and by $\beta_1(u)$, 
           $\beta_2(u)$ the chords of $S_1$ which are the support chords of $S_3$ and $S_4$, 
           respectively, perpendicular to $u$.
 \begin{claim}\label{bebe}
           The disc $S_4$ is suitable for $S_1$.
\end{claim}
 
            In order to prove the Claim \ref{bebe} we have to show that the condition (\ref{jazzu}) 
            holds for $K=S_1$ and $B=S_4$. This is equivalent to proving that, for all $u\in S_1$, the 
            relation 
\begin{eqnarray}\label{simon}
            \Phi_u(S_4)\subset \Phi_u(\beta_i(u)), \textrm{ }\textrm{ }i=1,2.  
\end{eqnarray}
            holds. Since $\Phi_u(\beta_1(u))=\Phi_u(\beta_2(u))$, will be enough to prove this relation 
            just for $i=1$. By the choice of $S_3$ it is clear that  
\begin{eqnarray}\label{bridge}
            \Phi_u(S_3)\subset \Phi_u(\alpha_1(u)), 
\end{eqnarray}
           (see fig. \ref{suit}). Since $S_4 \subset \inte S_3$, on the one hand, for all $u\in S_1$, 
\begin{eqnarray}\label{apretones}
\Phi_u(S_4)\subset   \Phi_u(S_3)
\end{eqnarray}
and, on the other hand, 
            $|\alpha_1 (u)| \leqslant |   \beta_1(u)|$, where $|A|$ denotes the length of line the segment 
            $A\subset \Rn$. Since $|\Phi_u(\beta_1(u))|=|\beta_1(u)|$ and 
            $|\Phi_u(\alpha_1(u))|=|\alpha_1(u)|$, the previous inequality looks now as 
\begin{eqnarray}\label{grandota}
| \Phi_u(\alpha_1(u))| \leqslant |\Phi_u(  \beta_1(u))|.
\end{eqnarray}
           From (\ref{grandota}) and (\ref{bridge}) it follows $\Phi_u(S_3)\subset \Phi_u(\beta_1(u))$,
           and, from (\ref{apretones}),  $\Phi_u(S_4)\subset \Phi_u(\beta_1(u))$, i.e., the relation 
           (\ref{simon}) holds.
           
           Notice that with the same argument given in the proof of the Claim \ref{bebe}, we can 
           prove that for every solid ellipse $E\subset S_4$ the relations
\begin{eqnarray}\label{perrita}
            \Phi_u(E)\subset \Phi_u(\gamma_i(u)), \textrm{ }\textrm{ }i=1,2.  
\end{eqnarray}
           holds, where $\gamma_1(u)$, $\gamma_2(u)$ are the chords of $S_1$ which are the 
           support chords of $E$ perpendicular to $u$. Therefore we have  
\begin{figure}[H]\centering
\includegraphics [width=4in] {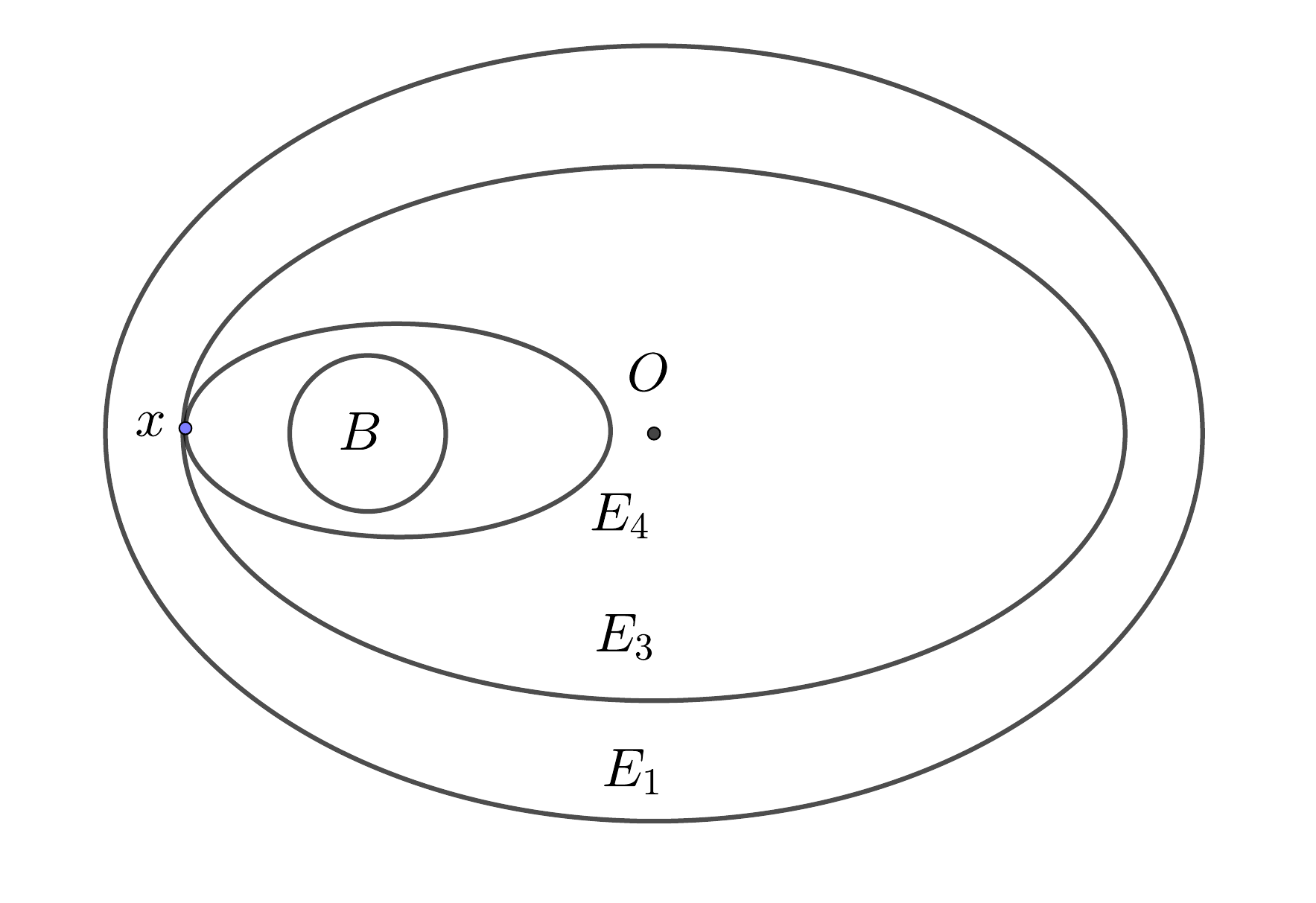} 
\caption{The disc $B$ is suitable for $E_1$.}
\label{terricolas}
\end{figure}
\begin{remark}\label{caderona}
         Let $E_1\subset \Rn$, $n\geq 3$ be a solid ellipsoid with center at $O$. Then the set of 
         suitable balls contained in $E_1$ and not containing $O$ is non-empty.
\end{remark}
\begin{proof}           
           It will be enough to consider the case $n=2$. Let $E_3, E_4\subset \inte E_1$ be solid 
           ellipses, $B\subset E_4$ be a disc and let $x\in E_3$ such that $E_3, E_4$ are homothetic to $E_1$,  
            $E_3$ has center at $O$, $E_4$ is tangent to $E_3$ at $x$, 
            $E_4\backslash \{x\}\subset \inte E_3$ and $O\notin E_4$.      
            (see fig. \ref{terricolas}). Let $T: \mathbb{A}^2 \rightarrow \mathbb{A}^2$ be an affine 
            transformation such that $T(E_1)=S_1$, $T(E_3)=S_3$ and $T(E_4)=S_4$, where $S_1, S_3$ 
            and $S_4$ are discs in accordance with the notation of Claim \ref{bebe}. By virtue of the fact that 
            we have the relation (\ref{perrita}) for the solid ellipse $E:=T(B)$, the disc $B$ is suitable for $E_1$.
\end{proof} 
 \begin{figure}[H]\centering
\includegraphics [width=6.5in] {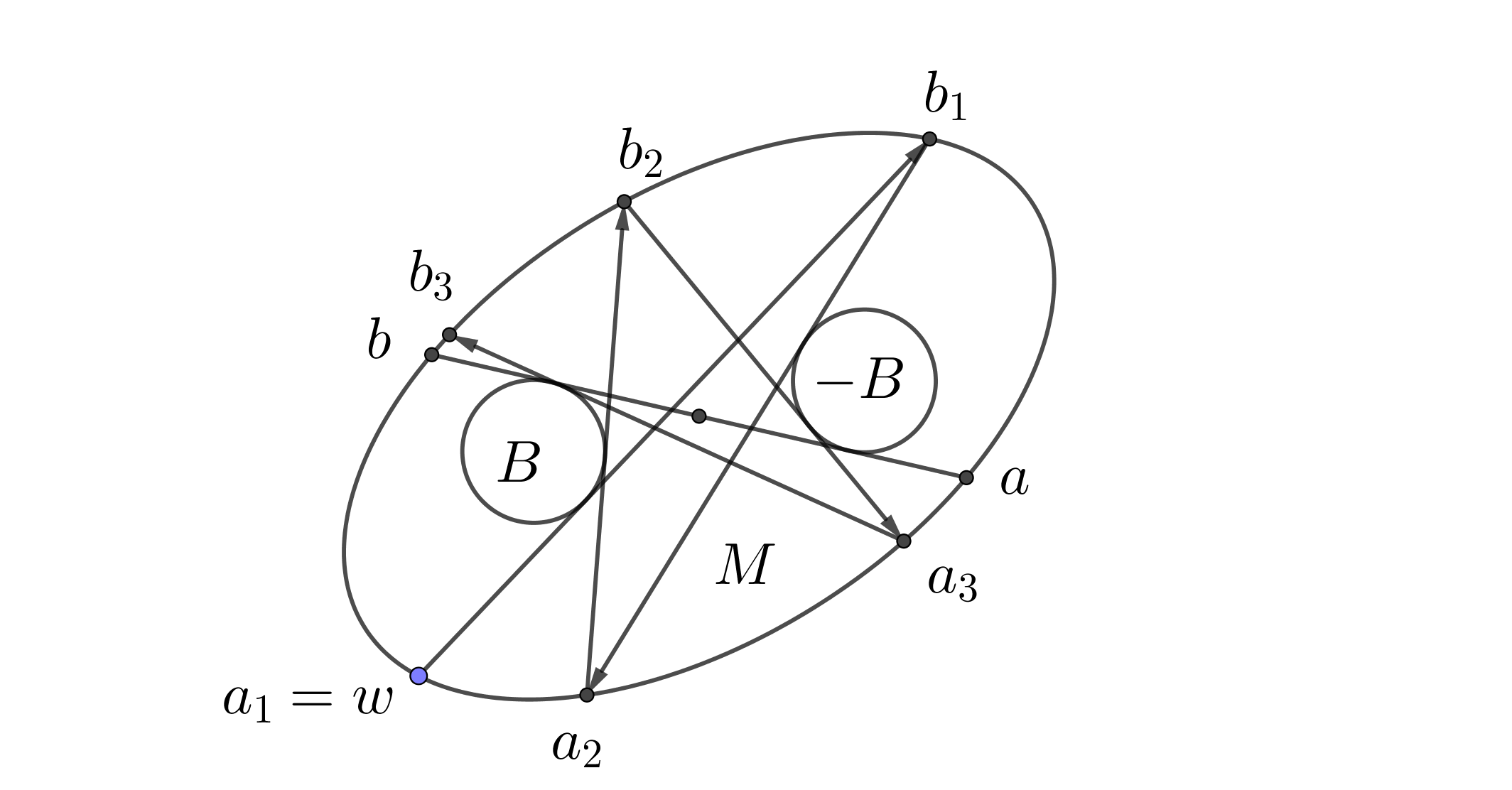} 
\caption{The chords $a_ib_i$ converge to a common chord of $B$ and $-B$.}
\label{chet}
\end{figure}
           \section{Proof of Theorem \ref{mozart} in dimension 3.}
        \subsection{Some Lemmas}
          In this section, we are going to present and prove a series of lemmas which are  
          necessary for the proof of Theorem \ref{mozart} in dimension 3.  
          
                   We recall the strategy for proving Theorem~\ref{mozart}.  We will prove: 
                   \begin{enumerate}
                   \item the body $K$ has many planar shadow boundaries (see Lemmas 
                   \ref{star}, \ref{gato}, \ref{vivaldi}),               
                    \item the body $K$ has many affine axes of symmetry (see Lemma \ref{jamaica}), 
                   \item One of the affine axes of symmetry is, in fact, an affine axis of 
                   revolution (see Lemmas \ref{orchata}, \ref{limon}, \ref{tamarindo}). 
                   \item$K$ has enough affine axes of symmetry to guarantee that the body $K$ is an ellipsoid.
                   \end{enumerate}
 
            Let $M\subset \Rd$ be a centrally symmetric convex body with center 
           at $O$ and let $B$ be a circle such that $O\notin B$. Given a fixed point 
           $w\in \bd K$, we construct two sequences $\{a_i\}_{i=1} ^\infty$, 
           $\{b_i\}_{i=1 }^\infty$ in the following manner. We make $a_1=w$ and 
           for every point $a_i\in \bd M$ we take the point $b_i\in \bd M$ such 
           that the line $L(a_i,b_i)$ is a support line of $B$ and if $u_i$ is a unit 
           vector with the property that the set of vectors $\{b_i-a_i,u_i\}$ is a 
           right frame of $\Rd$, then $B$ is located in the half-space 
           $a_i+\{x\in \Rd:  \langle x,u_i\rangle>0\}$ (See Fig. \ref{chet}). On the 
           other hand, for every $b_i\in \bd M$ we take the point 
           $a_{i+1}\in \bd M$ such that the line $L(b_i,a_{i+1})$ is a supporting 
           line of $-B$ and if $v_i$ is a unit vector with the property that the set of 
           vector $\{a_{i+1}-b_i,v_i\}$ is a right frame of $\Rd$, then $-B$ is 
           located in the half-space $b_{i+1}+\{x\in \Rd:  \langle x,v_i\rangle>0\}$.

           The following lemma will be an important tool in the proofs of Theorem \ref{mozart} in
            dimension 3.  
\begin{lemma}\label{star}
                For the sequences $\{a_i\}_{i=1} ^\infty$, $\{b_i\}_{i=1} ^\infty$ there 
                exists sub-sequences 
                $\{a_{i_s}\}_{s=1} ^\infty$, $\{b_{i_s}\}_{s=1} ^\infty$ which have the 
                properties  
\begin{eqnarray}\label{viri}
a_{i_s} \rightarrow a  \textit{ and } b_{i_s}\rightarrow b, \textit{ when } s\rightarrow \infty
\end{eqnarray} 
                where $a,b\in M$ and $L(a,b)$ is a common support line of $B$ 
                and $-B$ and is passing through $O$. 
\end{lemma}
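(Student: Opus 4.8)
The plan is to read the construction as the orbit of a single self-map of $\bd M$ and to analyse that map as a circle homeomorphism.

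\emph{Common tangents through $O$.} First I would record the relevant geometry. Write $c,r$ for the centre and radius of $B$; then $-B$ has centre $-c$ and radius $r$, and since $O\notin B$ one has $|c|>r$, so $B$ and $-B$ are disjoint and have exactly two internal common tangents. If a line $\ell$ supports both $B$ and $-B$ with the discs on opposite sides, then the signed distances of $c$ and of $-c$ to $\ell$ are $+r$ and $-r$; averaging shows $O\in\ell$. Conversely a line through $O$ tangent to $B$ is fixed by $x\mapsto -x$, which interchanges $B$ and $-B$, hence also supports $-B$. So the common supporting lines of $B$ and $-B$ through $O$ are precisely the two internal common tangents, and, because $M=-M$, each such line $m$ meets $\bd M$ in an antipodal pair $\{a,-a\}$.

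\emph{Reduction to a circle map.} Next I would turn the construction into a dynamical system. For $a\in\bd M$ a supporting line of $B$ through $a$ cannot support $M$ (it would leave $B\subset\inte M$ strictly on one side and so could not meet $B$); hence both supporting lines of $B$ through $a$ are chords of $M$, and exactly one of them, taken with the direction pointing into $M$, satisfies the right-frame and half-space condition of the construction, so its other endpoint $\sigma(a):=b$ on $\bd M$ is well defined; define $\tau(b):=a'$ analogously with $-B$, so that $b_i=\sigma(a_i)$ and $a_{i+1}=\tau(\sigma(a_i))$. For strictly convex $M$ the maps $\sigma$ and $\tau$ are homeomorphisms of $\bd M\cong S^1$: continuous because tangent lines and chord endpoints depend continuously on their data and the chords involved never degenerate, and strictly monotone of degree one because through a point outside $B$ pass only two tangent lines to $B$, so no flat spots occur and one full turn of $a$ yields one full turn of $b$. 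Hence $g:=\tau\circ\sigma$ is an orientation preserving homeomorphism of $S^1$ with $a_{i+1}=g(a_i)$.

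\emph{Fixed points and convergence.} Then I would identify the fixed points of $g$. If $m$ is an internal common tangent with $m\cap\bd M=\{a,-a\}$, labelled so that travelling from $a$ to $-a$ keeps $B$ on the left, then $\sigma(a)=-a$; travelling back keeps $B$ on the right and $-B$ on the left, so $\tau(-a)=a$ and $g(a)=a$. Conversely, if $g(a)=a$ and $b=\sigma(a)$ then $L(a,b)$ supports $B$, supports $-B$ as well (because $\tau(b)=a$), and the two orientation conventions place $B$ and $-B$ on opposite sides of it; by the first step $L(a,b)$ is then an internal common tangent through $O$ and $b=-a$. Finally I would invoke the elementary fact that an orientation preserving homeomorphism of $S^1$ with a fixed point has every orbit convergent to a fixed point: a lift $\tilde g\colon\RR\to\RR$ is increasing, commutes with the unit translation, and fixes some $x_0$, so $[x_0,x_0+1]$ is invariant and every orbit in it is monotone and bounded, hence converges, necessarily to a fixed point. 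Applied to $a_1=w$, $a_{i+1}=g(a_i)$, this gives $a_i\to a$ with $g(a)=a$, and then $b_i=\sigma(a_i)\to\sigma(a)=b$ by continuity, with $L(a,b)$ a common supporting line of $B$ and $-B$ through $O$ --- so one may take $i_s=s$. (In fact the whole sequences converge.)

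\emph{The main obstacle.} The delicate point --- and presumably the reason for stating only subsequences, and for not assuming strict convexity --- is that if $M$ is not strictly convex a tangent line of $B$ can meet $\bd M$ in a segment, so $\sigma,\tau$ may be multivalued or discontinuous and $g$ need not be a homeomorphism. In that case I would fall back on compactness of $\bd M\times\bd M$ to extract $a_{i_s}\to a$, $b_{i_s}\to b$, and, after a further refinement, $a_{i_s+1}\to a'$; since supporting a disc is a closed condition, $L(a,b)$ supports $B$ and $L(b,a')$ supports $-B$, and the monotone cyclic nature of the two steps --- which persists even without continuity --- forces $a=a'$, reducing again to the fixed configuration above. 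Making this monotonicity argument precise despite the possible discontinuities is the technical heart of the proof.
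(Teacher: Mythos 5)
Your route is genuinely different from the paper's. The paper proves the lemma by compactness plus contradiction: it extracts convergent subsequences $a_{i_s}\to a$, $b_{i_s}\to b$, assumes $L(a,b)$ fails to be a common supporting line of $B$ and $-B$ through $O$, and derives a contradiction by trapping the chords $[a_{i_s},b_{i_s}]$ in a thin band around $L(a,b)$ that the tangency rules eventually forbid; there is no dynamics and no monotonicity in that argument. Your reformulation of the two-step construction as the orbit of $g=\tau\circ\sigma$, an orientation-preserving circle homeomorphism whose fixed points are exactly the points on the two internal common tangents of $B$ and $-B$ (which, as your signed-distance and $x\mapsto -x$ argument correctly shows, are precisely the common supporting lines through $O$), is cleaner and buys strictly more: monotone convergence of the \emph{whole} sequence to a fixed point, not merely subsequential convergence, and an explicit identification of the possible limits. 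For strictly convex $M$ your argument is complete and correct.

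The gap is the one you flag yourself: the lemma is stated for an arbitrary $O$-symmetric convex body $M$, and without strict convexity $\sigma$ and $\tau$ need not be single-valued or continuous, so $g$ need not be a circle homeomorphism; your fallback ("the monotone cyclic nature of the two steps forces $a=a'$") is a sketch, not a proof, and it is exactly there that one must rule out the orbit drifting without settling on a common tangent. Two remarks on that. First, part of the work is free: supporting a disc is a closed condition and the chords $[a_i,b_i]$ have length bounded below (they touch $\partial B\subset\operatorname{int}M$), so \emph{any} subsequential limit line $L(a,b)$ automatically supports $B$; the entire content is that it also supports $-B$ on the correct side, which is where your monotonicity must carry the load. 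Second, for the purposes of this paper the gap is harmless: the lemma is only ever applied to $M=\Phi_u(K)$ with $K$ strictly convex (Theorem 1 by hypothesis, Theorem 2 because constant-width sections force strict convexity), and projections of strictly convex bodies are strictly convex, so the case you prove covers every use. If you want the lemma in its stated generality, the last step still has to be supplied.
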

\begin{proof}
             We will prove the lemma by contradiction. By virtue of the 
             compactness of $M$ there exists a point $a\in \bd M$ and a 
             sub-sequence $\{a_{i_s}\}$ of $\{a_i\}$ such that 
             $a_{i_s}\rightarrow a$, $s\rightarrow \infty$. Let us assume that 
             there exists a point $b\in \bd M$ for which the following two properties 
             hold: 1) the sub-sequence $\{b_{i_s}\}$ of $\{b_i\}$ is such that 
             $b_{i_s}\rightarrow b$, $s\rightarrow \infty$ and 2) $L(a,b)$ is not a 
             common supporting line of $B$ and $-B$ passing through $O$. In 
             particular, suppose that $L(a,b)$ is not a supporting line of $B$. Let 
             $W_1,W_2\subset \Rd$ be two lines parallel to $L(a,b)$ and whose 
             distance to $L(a,b) $ are equal to $\epsilon$, $0<\epsilon<1$. We 
             denote by $D$ the band determined by $W_1,W_2$. Since 
             $a_{i_s}\rightarrow a$, $s\rightarrow \infty$, there exists 
             $N_1\in \mathbb{N}$ such that $|a_{i_s} - a|<\epsilon$ for $s>N_1$. 
             On the other hand, by virtue that $b_{i_s}\rightarrow b$, 
             $s\rightarrow \infty$, there exists $N_2\in \mathbb{N}$ such that 
             $|b_{i_s} - b|<\epsilon$ for $s>N_2$. Let $N:=\max \{N_1,N_2\}$. 
             We take $s_0>N$. Then
\begin{eqnarray}\label{kiss}
             |a_{i_{s_0}}-a|<\epsilon \textrm{ and }|b_{i_{s_0}}-b|<\epsilon,
\end{eqnarray}
              i.e., the chord $[a_{i_{s_0}},b_{i_{s_0}}]$ is contained in $D$.
 
              On the other hand, given $x\in \bd M$ we denote by $l(x),m(x)$ the 
              supporting lines of $B$ passing through $x$ and by $y(x), z(x)$ the 
              intersections of $l(x)$ and $m(x)$ with $\bd M$, respectively, 
              $y(x)\not=x\not= z(x)$. Since $L(a,b)$ is not a supporting line of 
              $B$ we can see that if $\epsilon>0$ is small enough, for each 
              $x\in \bd M$ such that $|x-b|<\epsilon$ the chords $[x,y(x)]$, 
              $[x,z(x)]$ do not belong to $D$. Thus $|a_{i_{s_0}}-a|>\epsilon$ 
              which contradicts (\ref{kiss}).  

              If the sub-sequence $\{b_{i_s}\}$ does not converge, a sub-sequence 
              of it $\{b_{i_{s_{r}}}\}$ will converge to a point $b\in \bd M$. Now we 
              assume again that $L(a,b)$ is not a common supporting line of $B$ 
              and $-B$ passing through $O$. Furthermore, we can assume that 
              $L(a,b)$ is not a supporting line of $B$. Finally, we repeat the 
              previous argument for the sequences 
              $\{a_{i_{s_{r}}}\}$, $\{b_{i_{s_{r}}}\}$ and we will get a contradiction 
              again.
 
              The other cases can be considered analogously.
 \end{proof}
                 Let $A_1, A_2\subset \Rn$ be two sets and let $\Pi$ be a hyperplane. We say the $\Pi$ 
                 \textit{strictly separates $A_1$ and $A_2$} if $A_1$ and $A_2$ are in different open half-spaces 
                  defined by $\Pi$. 
    
                  Let $\Gamma$ be a support plane of $B$. Since $K$ is  centrally 
                  symmetric with center at $O$ and, by hypothesis, $\Gamma \cap K$ is centrally 
                  symmetric, there exists a vector $u$ such that
\begin{eqnarray}\label{delicia}
                 -(\Gamma\cap K)=u+(\Gamma\cap K).
\end{eqnarray}
                   We denote by $P_u$ the plane parallel to $\Gamma$ passing through $O$.
\begin{lemma}\label{gato} 
                   For every support plane $\Delta$ of $B$, parallel to $u$, the center $c_\Delta$ of the 
                    section $\Delta \cap K$ is in $P_u$, equivalently, the vector $v\in \Rt$ with the property 
\[
                   -(\Delta \cap K)=v+(\Delta \cap K)
\]
                   is parallel to $P_u$.  
\end{lemma}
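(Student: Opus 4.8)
The plan is to adopt coordinates adapted to $\Pi$, extract from the $O$-symmetry of $K$ together with the Barker--Larman hypothesis a translation identity linking the two sections of $K$ at heights $h$ and $-h$, restrict that identity to $\Delta$ using $u\parallel\Delta$, and then use the strict convexity of $K$ to pin down the height of the centre of $\Delta\cap K$. Concretely, let $\xi$ be a unit normal of $\Pi$, so $\Pi=\{x:\langle x,\xi\rangle=0\}$, and write the chosen supporting plane of $B$ parallel to $\Pi$ as $\Gamma=\{x:\langle x,\xi\rangle=h\}$; we may assume $h\neq0$, since $h=0$ would mean $\Gamma=\Pi$, whence~(\ref{delicia}) gives $u=0$ and there is nothing to prove. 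As $-(\Gamma\cap K)$ lies in $\{x:\langle x,\xi\rangle=-h\}$, equation~(\ref{delicia}) forces $\langle u,\xi\rangle=-2h$, and combining~(\ref{delicia}) with $-K=K$ gives
\[
(-\Gamma)\cap K=-(\Gamma\cap K)=u+(\Gamma\cap K);
\]
thus the section of $K$ at height $-h$ is the translate by $u$ of the section at height $h$.

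Now fix a supporting plane $\Delta$ of $B$ parallel to $u$, with unit normal $\eta$, so $\langle\eta,u\rangle=0$; since $\langle\xi,u\rangle=-2h\neq0$, the plane $\Delta$ is not parallel to $\Pi$. Because $u$ is a direction of $\Delta$, translation by $u$ maps $\Delta$ onto itself and, by the identity above, maps $\Gamma\cap K$ onto $(-\Gamma)\cap K$; intersecting with $\Delta$ we obtain
\[
\Delta\cap(-\Gamma)\cap K=u+\bigl(\Delta\cap\Gamma\cap K\bigr).
\]
Set $D:=\Delta\cap K$; this is a planar convex body (with non-empty interior in $\Delta$, since $\Delta$ touches $B\subset\inte K$), centrally symmetric by hypothesis with some centre $c$, and strictly convex because $\bd D\subset\bd K$. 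The displayed identity says that the chords $D\cap\{x:\langle x,\xi\rangle=h\}$ and $D\cap\{x:\langle x,\xi\rangle=-h\}$ are translates of one another; in particular they have equal length, and one is empty exactly when the other is. The conclusion to be proved — that the centre of $\Delta\cap K$ lies in $\Pi$ — is precisely $\langle c,\xi\rangle=0$, equivalently that the vector $v=-2c$ satisfies $\langle v,\xi\rangle=0$.

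To deduce $\langle c,\xi\rangle=0$, I would study the function $\lambda(t)$ equal to the length of the chord $D\cap\{x:\langle x,\xi\rangle=t\}$, for $t$ in the range $I$ of $\langle\cdot,\xi\rangle$ over $D$. Strict convexity of $D$ makes $\lambda$ strictly concave on $\inte I$: were $\lambda$ affine on a nondegenerate subinterval, both boundary arcs of $D$ over that subinterval would be line segments, contradicting the strict convexity of $K$; and central symmetry of $D$ about $c$ makes $\lambda$ symmetric about $t=\langle c,\xi\rangle$, hence strictly monotone on either side of $\langle c,\xi\rangle$. If the chords at heights $\pm h$ are nondegenerate, then $h,-h\in\inte I$ and $\lambda(h)=\lambda(-h)$; by symmetry also $\lambda(2\langle c,\xi\rangle-h)=\lambda(h)$, and since $-h$ and $2\langle c,\xi\rangle-h$ lie on the same, injective, side of $\langle c,\xi\rangle$, they must coincide, giving $\langle c,\xi\rangle=0$. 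If instead those chords are single points, then $\pm h$ are the extreme values of $\langle\cdot,\xi\rangle$ on $D$, so $\langle c,\xi\rangle$, being the average of the two, is again $0$.

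The one remaining possibility is that the planes $\{x:\langle x,\xi\rangle=\pm h\}$ miss $\Delta\cap K$ entirely, so that the translation identity carries no information; I expect this degenerate configuration to be the genuinely delicate point of the proof. I would dispose of it indirectly — for instance by replacing $\Gamma$ with another supporting plane $\Gamma^{\star}$ of $B$ whose associated translation vector is still parallel to $\Delta$ but which does cross $\Delta\cap K$, so that the argument of the previous paragraph applies to $\Gamma^{\star}$; or by a continuity argument, letting $\Delta$ range over the circle of supporting planes of $B$ parallel to $u$, along which the centre of $\Delta\cap K$ depends continuously; or by invoking the auxiliary Lemma~\ref{star} in a suitable plane through $O$ to transfer the position of the centre of $\Delta\cap K$ to that of a better-placed section. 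Outside this case the argument is precisely the short one above.
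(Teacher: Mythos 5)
Your argument is essentially the paper's: the author likewise restricts the translation identity $-(\Gamma\cap K)=u+(\Gamma\cap K)$ to $\Delta$ (using that translation by $u$ preserves $\Delta$) to obtain the two parallel, equal-length chords $\Delta\cap\Gamma\cap K$ and $\Delta\cap(-\Gamma)\cap K$ at symmetric heights, and then invokes the strict convexity and central symmetry of $\Delta\cap K$ to force the centre onto $\Pi$; your chord-length-function discussion merely fills in the step the paper leaves implicit. The degenerate case you flag (both chords empty) is not treated in the paper either --- its proof tacitly assumes $\Delta\cap\Gamma\cap K\neq\emptyset$ --- so on that point you are being more candid than the source rather than taking a different route.
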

\begin{proof} 
                   The proof is divided in three cases:
    \begin{itemize}
        \item [A)] $\Gamma$ does not separate $B$ and $-B$ and 
                   $\Gamma \cap -B=\emptyset$,    
       \item [B)] $\Gamma$ strictly separates $B$ and
                    $-B$,
       \item [C)] $\Gamma$ does not separate $B$ and $-B$ and 
                   $\Gamma \cap -B\not=\emptyset$.
     \end{itemize}
                   First, we prove case A), i.e., we suppose that $\Gamma$ does not separate $B$ and 
                   $-B$ and $\Gamma \cap -B=\emptyset$. Let $\Delta$ be a supporting plane of $B$ 
                   parallel to $u$, where $u$ satisfies (\ref{delicia}). We are going to prove that the center 
                   $z$ of $\Delta \cap K$ is in $P_u$. Since $B$ is suitable for $K$, the relation (\ref{jazzu}) 
                   holds, consequently, the chords $\Delta \cap (-\Gamma \cap K)$ and 
                   $\Delta \cap (\Gamma \cap K)$ of $\Delta \cap K$ are well defined. Thus, by 
                   (\ref{delicia}) it follows that
\begin{eqnarray}\label{bella}
                   \Delta \cap (-\Gamma \cap K)=  u+[ \Delta \cap (\Gamma \cap K)].
\end{eqnarray}
                   By virtue of (\ref{bella}) we conclude that the chords 
                   $ \Delta \cap (-\Gamma \cap K)$ and $\Delta \cap (\Gamma \cap K)$ of 
                   $\Delta \cap K$ are parallel, and they have the same length. From this and by 
                   the strict convexity of $\Delta \cap K$ and the Barker-Larman condition, it follows that $z\in P_u$.
                   
                   The proofs for the cases B) and C) are completely analogous to the proof of case A) 
                   however we must justify the fact that the chords $\Delta \cap (-\Gamma \cap K)$ and 
                   $\Delta \cap (\Gamma \cap K)$ of $\Delta \cap K$ are well defined in both cases. 
                   
                   Proof for the case B). Let $\Gamma$ be a support plane of $B$ which strictly 
                   separates $B$ and $-B$. We denote by $C$ the infinite cylinder defined by 
                   $K_{\Gamma}$ and $u$, where $u$ satisfies (\ref{delicia}). We are going to prove 
                   that $B\subset C$. Since $\Gamma$ is a support plane of $B$ which strictly 
                   separates $B$ and $-B$ it follows that 
                   $B\subset K\backslash [\conv (K_{\Gamma}\cup K_{-\Gamma})]$ 
                   (notice that $-\Gamma$  strictly separates $B$ and $-B$). By Theorem 12.2.1 P. 297 
                   of \cite{Matousek}, for all plane $W$ parallel to $\Gamma$ such that 
                   $ W \cap [\conv (K_{\Gamma}\cup K_{-\Gamma})]=\emptyset$, we have 
\[
                   W\cap K\subset W\cap C.                   
\]
                   Since $B\subset K\backslash [\conv (K_{\Gamma}\cup K_{-\Gamma})]$ 
                   it follows that
\[
                   W \cap B\subset W\cap K.
\]
                   Hence $W \cap B\subset W\cap C$. The arbitrariness of $W$ it yields our claim 
                   that $B\subset C$. Analogously, we can see that  $-B\subset C$.

                    By virtue that $B\subset C$,  $-B\subset C$, for every support plane $\Delta$ 
                    of $B$, parallel to $u$, the chords $\Delta \cap (\Gamma \cap K)$ and 
                    $ \Delta \cap (-\Gamma \cap K)$ of $\Delta \cap K$ are well defined.
                   
                   Proof for the case C). Now we consider the case when $\Gamma$ does not separate 
                   $B$ and $-B$ and $\Gamma \cap -B\not=\emptyset$. Let $\Gamma'$ be a support 
                   plane of $-B$ such that $\Gamma'\not=-\Gamma$. Let $u,u'\in \Rt$ such the relation 
                   (\ref{delicia}) holds for the planes $\Gamma$, $-\Gamma$ and $\Gamma'$, $-\Gamma'$, 
                   respectively. Let $\Delta$ be a support plane of $B$ parallel to $u'$. By virtue of $B$ 
                   is suitable 
                   for $K$ and $\Gamma'$ is support plane of $G$ then 
\[
                  B\subset \conv (K_{\Gamma'} \cup K_{-\Gamma'}),  
\]
                   consequently, the chords $\Delta \cap (-\Gamma' \cap K)$ and 
                   $\Delta \cap (\Gamma' \cap K)$ of $\Delta \cap K$ are well defined. Furthermore, 
                   notice that $\Gamma'$ satisfies the condition of case A), therefore 
\begin{eqnarray}\label{musica}
c_{\Delta}\in P_{u'}=P_{u}.
\end{eqnarray}
\begin{figure}[H]\centering
        \includegraphics [width=6.0in] {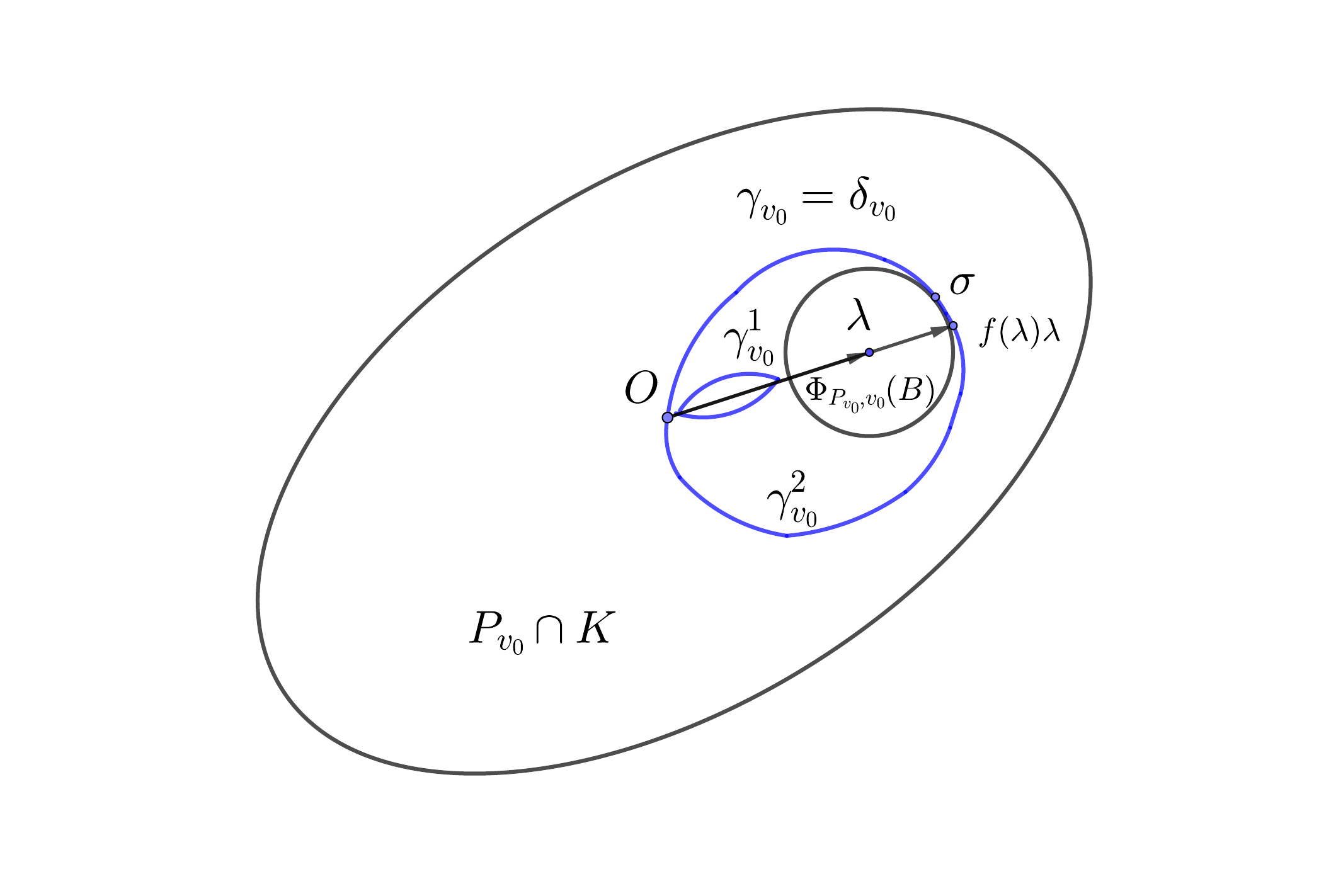} 
        \caption{ The curve $\delta_{v}=\gamma_{v}$ is the union of two simple closed 
        con\-ti\-nuous curves $\gamma_{v}^1$, $\gamma_{v}^2$.}
        \label{pompotas}
\end{figure} 
                   To finish, we need to prove that $u$ and $u'$ are parallel. Since the planes 
                   $\Gamma$, $-\Gamma$ are between the planes $\Gamma'$ and 
                   $-\Gamma'$ (i.e., are in the band defined by the planes $\Gamma'$ and 
                   $-\Gamma'$) the chords $\Delta \cap (-\Gamma \cap K)$ and 
                   $\Delta \cap (\Gamma \cap K)$ of $\Delta \cap K$ are well defined. Because the 
                   distance from the plane $\Gamma$ to $O$ is equal to the distance from the plane 
                   $-\Gamma$ to $O$ it follows that distance from the line $\Gamma\cap \Delta$ to 
                   $c_{\Delta}$ is equal to the distance from the line $-\Gamma \cap \Delta$ to 
                   $c_{\Delta}$. Thus the length of the chords $\Delta \cap (\Gamma \cap K)$ and 
                   $\Delta \cap (-\Gamma \cap K)$ of $\Delta \cap K$ are equal. Analogously, we prove that 
                    the length of the chords $-\Delta \cap (\Gamma \cap K)$ and 
                    $-\Delta \cap (-\Gamma \cap K)$ of $-\Delta \cap K$ are equal. This implies that $u$ 
                   is parallel to $\Delta$, otherwise, we would contradict the strict convexity of the 
                   section $-\Gamma \cap K$ because there would be four parallel chords of 
                   $-\Gamma \cap K$ of the same length $\Delta \cap (-\Gamma \cap K)$, 
                   $-\Delta \cap (-\Gamma \cap K)$, 
                   $u+[\Delta \cap (\Gamma \cap K)]$, $u+[-\Delta \cap (\Gamma \cap K)]$. Varying 
                   the support plane $\Delta$ of $B$ parallel to $u'$ we conclude that $u$ is parallel to $u'$.
\end{proof}
We have the following corollary of Lemma \ref{gato}, 
 \begin{corollary}\label{abejita}                
The locus 
\[
\gamma_u:=\{c_\Delta: \Delta \textrm{ support plane of } B \textrm{ parallel to }u\}\subset P_u
\] 
                    is a planar closed continuous curve and, for every 
                    $\omega \in \mathbb{S}^2\cap P_u$, there exists a real number $f(\omega)$ 
                    such that $  f(\omega)\omega$ 
                    belongs to $\gamma_u$. Furthermore,   
\begin{eqnarray}\label{perro}
\textrm{ if } v\in \gamma_u,  \textrm{ then }u\in \gamma_v  
\end{eqnarray}            
and $\gamma_u$ coincides with the locus
\[
\delta_u:=\{ \textrm{ mid-points of the chords of the section }P_u\cap K \textrm{tangent to }\Phi_{P_u, u}(B)\}.
\]
 \end{corollary}    
\begin{proof}
                    We are going to prove that for every $\omega \in \mathbb{S}^2\cap P_u$, there exists 
                    a real number $f(\omega)$ such that $  f(\omega)\omega$ belongs to $\gamma_u$. 
                    To prove this, since it is easy to prove that $\gamma_u$ coincides with 
                    $\delta_u$, it will be enough to observe that, for the ray $\rho$ defined by 
                    $\omega \in \mathbb{S}^2\cap P_u$, the chords $[\alpha,\beta]$ and 
                    $[\gamma, \delta]$ of the section $P_u\cap K$ 
                    tangent to $\Phi_{P_u, u}(B)$ and parallel to $\rho$ are such that their mid-points, 
                    $ \mu,\tau\in \delta_u=\gamma_u$, respectively, are in different half-planes defined by 
                    $\rho$. The existence of $f(\omega)$ such that $ f(\omega)\omega \in \gamma_u$ 
                    follows from the continuity of the curve $\gamma_u$ by virtue that 
                    $ \mu, \tau \in \gamma_u$.
                     
                    The curve $\delta_{v}=\gamma_{v}$ is the union of two continuous curves 
                  $\gamma_{v}^1$, $\gamma_{v}^2$ such that 
                  $\gamma_{v}^1 \cap \gamma_{v}^2$ consists only of the point $O$, the curves 
                  $\gamma_{v}^1$, $ \gamma_{v}^2$ are simple closed with
                  $\gamma_{v}^1\backslash \{O\}\subset \inte [\gamma_{v}^2]$ (By the Jordan Curve 
                  Theorem \cite{Hales} the set 
                   $\inte [\gamma_{v}^2]$ is well defined), the ellipse 
                  $\Phi_{P_{v}, {v}}(B)$ and the curve $\gamma_{v}^2 \backslash \{O\}$ has a 
                  common point, say $\sigma$, and the relation 
\begin{eqnarray}\label{collelle}
 \Phi_{P_{v}, {v}}(B) \backslash \{\sigma\} \subset \inte [\gamma_{v}^2]
\end{eqnarray} 
                   holds (see fig. \ref{pompotas}). 
         \end{proof}
          
                    For $u\in \mathbb{S}^{n-1}$, we denote by $S\partial(K,u)$ ($S\partial(K,L)$) 
                   the \textit{shadow boundary of $K$ with respect to the vector} $u$ (\textit{to the 
                   line $L$}), i.e., the set of points $x$ in $\bd K$ such that there exists a supporting 
                   hyperplane of $K$ passing through $x$ and parallel to $u$ (parallel to $L$), and 
                   by $u^{\perp}$ the plane perpendicular to $u$ passing through the origin. 
 \begin{lemma}\label{vivaldi}
                   With the notation of Lemma \ref{gato}, if the planes $\Gamma$, $P_u$ and the 
                   vector $u\in \mathbb{S}^{2}$, which satisfies (\ref{delicia}), are such that 
                   $O\notin \Phi_{P_u, u}(B)$, then the shadow boundary $S\partial(K,u)$ is contained 
                   in the plane $P_u$.
\end{lemma}
\begin{figure}[H]
    \centering
    \includegraphics[width=6.5in]{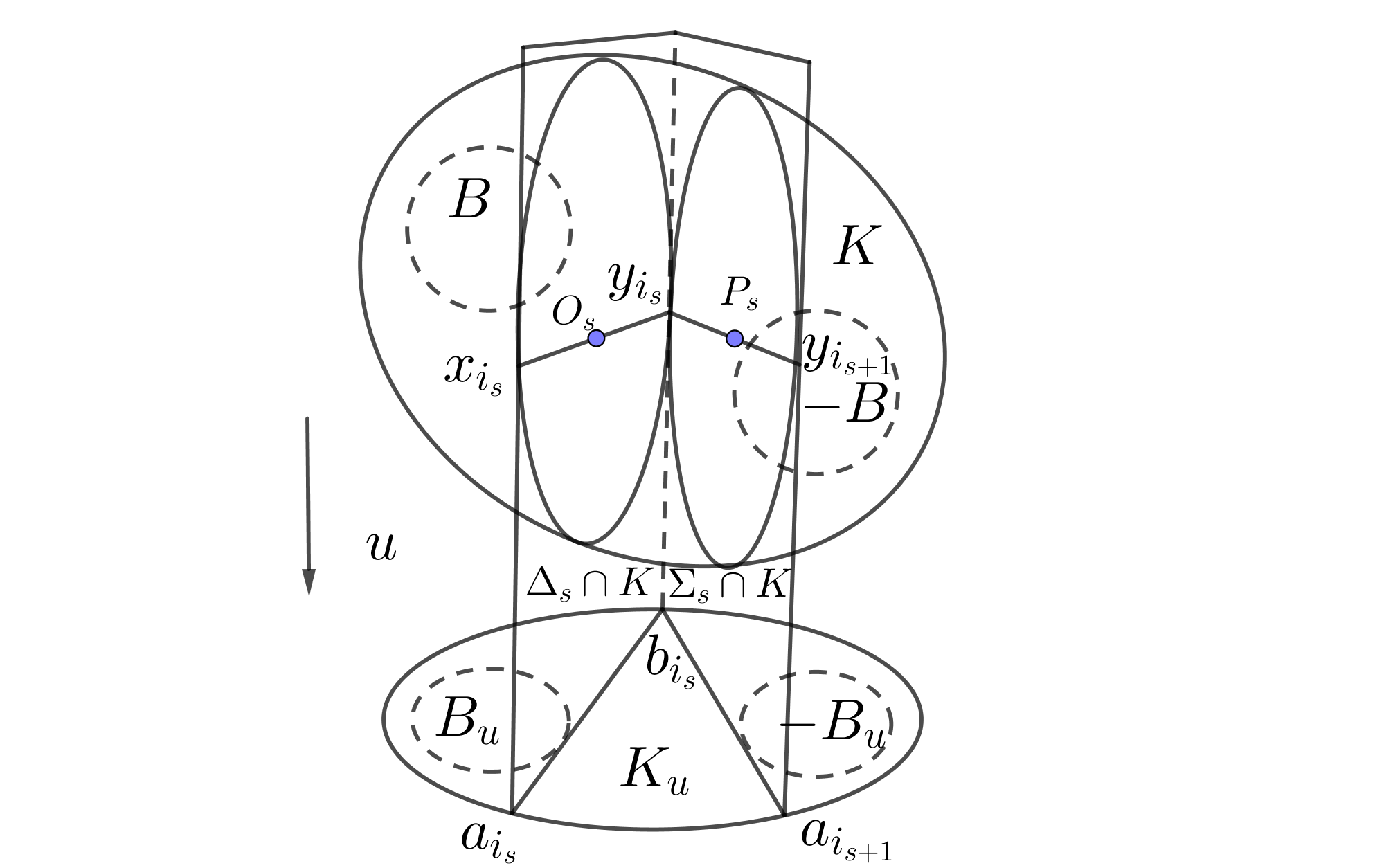}
    \caption{The shadow boundary $S\partial(K,u)$ is contained in a plane.}
    \label{naranja}
\end{figure}  
\begin{proof} 
                   We denote by $\xi (u)$ the shadow boundary $S\partial(K,u)$ of $K$ in the direction 
                   $u$. Let $P_u^1,P_u^2$ the two closed half-spaces defined by $P_u$. We claim 
                   that either $\xi(u) \subset \inte K_1$ or $\xi(u) \subset \inte K_2$ are impossible, 
                   where $K_i:=K\cap P_u^i$,  $i=1,2$.   Since $\xi(u)$ is centrally symmetric with 
                   center at $O$, if $x\in \xi(u)$ and $x\in \inte K_1$, then $-x\in \xi(u)$ and 
                   $-x\in \inte K_2$ (By virtue that $\inte K_2=-\inte K_1$). Thus 
                   $\xi(u)\cap P_u\not=\emptyset$. 
 
                    Let $z\in \xi(u)\cap P_u$. Let $L$ be a supporting line of $K$ through $z$ parallel to 
                   $u$. For $K_u:=\Phi_{P_u,u}(K)$, $B_u:=\Phi_{P_u ,u }(B)$, 
                   $\bar{B}_u:=\Phi_{P_u,u}(-B)$ and $z=a_1$ we construct the sequences $\{a_i\}$, 
                   $\{b_i\}\subset P_u$ as in the Lemma \ref{star}. By virtue of $O\notin \Phi_{P_u, u}(B)$, 
                   We are in a position to apply the Lemma \ref{star}. Thus, there exist sub-sequences 
                   $\{a_{i_s}\}$, $\{b_{i_s}\}$ such that
\begin{eqnarray}\label{richard}
                   a_{i_s} \rightarrow a  \textrm{ and } b_{i_s} \rightarrow b,
\end{eqnarray}
                   when $s\rightarrow \infty$, where $a,b\in \bd K_u$ and $L(a,b)$ is a common tangent 
                   line of $B_u$ and $\bar{B}_u$ and through $O$. It is clear that 
                   $\Delta_i:=\Phi_{P_u,u}^{-1}(L(a_{i},b_{i}))$ is a supporting plane of $B$ and 
                   $\Sigma_i:=\Phi_{P_u,u}^{-1}(L(b_{i},a_{i+1}))$ is a supporting plane of $-B$. By 
                   Lemma \ref{gato},  the centre $o_i$ of the section $\Delta_i \cap K$ and the centre $p_i$ of 
                   the section $\Sigma_i \cap K$ are such that                  
                 \begin{eqnarray}\label{deliciosa} 
                  o_i, p_i \in P_u, i=1,2,...
                  \end{eqnarray}
                   Let $\pi_i:\Delta_i \rightarrow \Delta_i$ and $\rho_i:\Sigma_i \rightarrow \Sigma_i$ 
                   be the central symmetries with respect to $o_i$ and $p_i$, respectively. Then
\[
          b_{i}=\pi_i(a_{i}) \textrm{ }\textrm{ and }\textrm{ }a_{i+1}=\rho_i(b_{i}).
\] 
                   It follows that $a_{i},b_{i} \in \xi(u)$ for all $i$. In particular, It follows that 
                   $a_{i_s},b_{i_s} \in \xi(u)$ for all $s$. By (\ref{richard}), it follows that
\begin{eqnarray}\label{carla}
           a,b\in \xi(u).
\end{eqnarray}
                 Let $x_1\in \xi(u)$, $x_1\not=z$. We are going to prove that $x_1\in P_u$. On the 
                 contrary, let us assume that $x_1\notin P_u$. Let $\delta>0$ the distance from $x_1$ to 
                 $P_u$. For $K_u:=\Phi_{P_u ,u}(K)$, $B_u:=\Phi_{P_u,u}(B)$, 
                 $\bar{B}_u:=\Phi_{P_u,u }(-B)$  
                 and $a_1=\Phi_{P_u,u }(x_1)$ we construct the two sequences 
                 $\{a_i\}$, $\{b_i\}\subset P_u$ as in Lemma \ref{star}. By virtue of 
                 $O\notin \Phi_{P_u, u}(B)$, we are in position to apply Lemma \ref{star}. 
                 Therefore there exists the  sub sequences $\{a_{i_s}\}$, $\{b_{i_s}\}$ such that
\begin{eqnarray}\label{kenni}
       a_{i_s} \rightarrow a  \textrm{ and } b_{i_s} \rightarrow b,
\end{eqnarray}
                 when $s\rightarrow \infty$, where $a,b\in \bd K_u$ and $L(a,b)$ is a common 
                 tangent line of $B_u$ and $\bar{B}_u$ and through $O$. 
  
                  We define the plane $\Delta_i, \Sigma_i$ and the maps $\pi_i$, $\rho_i$ as before. 
                 We define the sequences $\{y_{i}:=\pi_i(x_{i})\}$, 
                 $\{x_{i+1}:=\rho_i(y_{i})\}\subset \bd K$  and notice that 
\[
a_{i}=\Phi_{P_u,u}(x_{i}) \textrm{ and }b_{i}=\Phi_{P_u,u}(y_{i}) \textrm{ }\textrm{ }\textrm{ }\textrm{ or, equivalently, }\textrm{ }\textrm{ }\textrm{ }\{x_{i}\}, \{ y_{i}\}\subset S\partial (K,u)
\]
                   (See Fig. \ref{naranja}).  By the Lemma \ref{gato}, the centre $o_i$ of the section 
                   $\Delta_i \cap K$ is in $P_u$ for all $i$ and the centre $p_i$ of the section 
                   $\Sigma_i \cap K$ is in $P_u$ for all $i$. Thus the distances between $y_{i}$ and 
                   $P_u$ and $x_{i}$ and $P_u$ are equals to $\delta$ for all $i$. By virtue of (\ref{kenni}) 
                   and the compactness of $\bd K$ there exists 
                   \begin{eqnarray}\label{cheve}
                   x,y\in\xi(u)
                   \end{eqnarray}
                    such that 
                   $x_{i_s} \rightarrow x $ and $y_{i_s}\rightarrow y$, when $s\rightarrow \infty$, and 
                   \begin{eqnarray}\label{ahorcada}
                   \Phi_{P_u ,u}(x)=a \textrm{ }\textrm{  and }\textrm{ } \Phi_{P_u,u}(y)=b.
                    \end{eqnarray}
                    Furthermore, the distances between 
                   $x$ and $P_u$ and between $y$ and $P_u$ are equal to $\delta$. Therefore $x\not=a$, 
                   $y\not=b$ (since $a,b\in P_u$ because $z=a_1$ and (\ref{deliciosa}) holds), 
                   there are support lines of $K$ passing through $x$ and $y$ parallel to $u$, by (\ref{cheve}) 
                   and there are support lines of $K$ passing through $a$ and $b$ parallel to $u$, by (\ref{carla}).
                  
                   Note that, since $K$ is strictly convex, for every unit vector $u$, and every support line $L$ 
                      of $K$ parallel to $u$, there exists a unique point $x$ on $L$ that $x$ belongs to the shadow 
                      boundary $S\partial (K,u)$. We have shown that $a\not=x$, $a,x\in S\partial (K,u)$, and that, 
                      by (\ref{ahorcada}), 
                      $a$ and $x$ belong to a support line of $K$ parallel to $u$ ; this clearly contradicts the previous 
                      observation. Consequently, we have a contradiction.
\end{proof}
\begin{figure}[H]\centering
\includegraphics [width=6.0in] {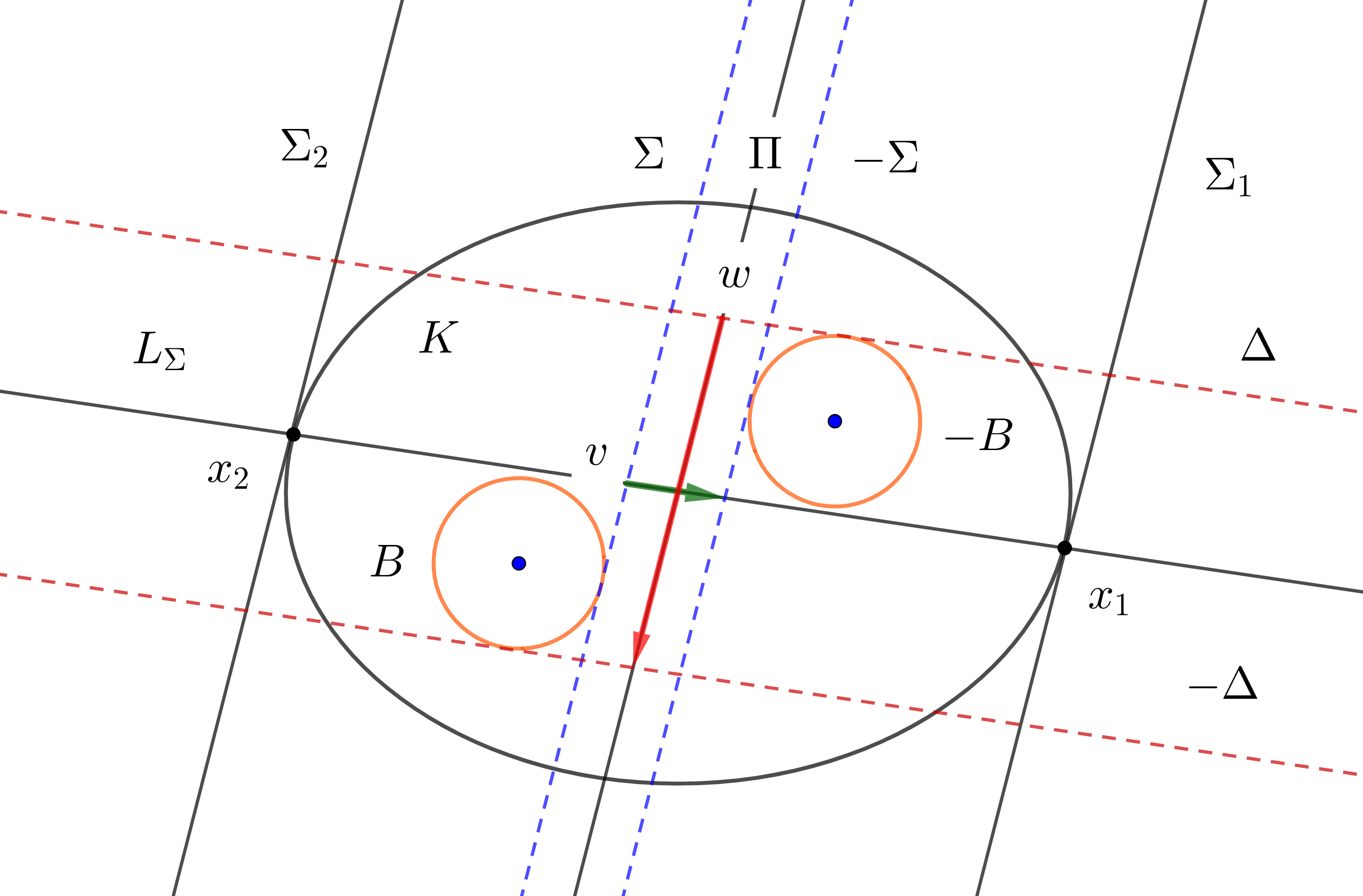} 
\caption{The line $L_{\Sigma}$ is affine axis of symmetry of $K$.}
\label{angel}
 \end{figure}
               Let $\Pi$ be a plane in $\Omega$. Let $\Sigma$ be a support plane 
              of $B$ parallel to $\Pi$, $\Sigma$ separates $B$ and $-B$.  By virtue 
              of $\Sigma \cap K$ has center by hypothesis, the section $-\Sigma \cap K$ has 
              center. Furthermore, there exists $v\in \Rt$ such that 
              $-\Sigma \cap K=v+(\Sigma \cap K)$. Let $\Sigma_1,\Sigma_2$ be 
              the supporting planes of $K$ parallel to $\Pi$. We denote by 
              $L_{\Sigma}$ the line defined by the points 
              $x_i:=\Sigma_i\cap \bd K,i=1,2$.
 \begin{lemma}\label{jamaica}
              The line $L_{\Sigma}$ is an affine axis of symmetry of $K$ parallel 
              to $v$, and all the sections of $K$ parallel to $\Sigma$  are similar 
              and similarly situated.
\end{lemma}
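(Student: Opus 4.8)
The plan is to use the planar-shadow-boundary machinery of Lemmas \ref{gato} and \ref{vivaldi} to put $K$ in a normal form and then extract the homothety. Fix a supporting plane $\Sigma$ of $B$ in $\Omega$ and let $\Pi$ be the parallel plane through $O$. Applying the Barker-Larman hypothesis to $\Sigma$ and using the $O$-symmetry of $K$ as in (\ref{delicia}) (with $\Gamma=\Sigma$), we obtain a vector $u$ with $-(\Sigma\cap K)=u+(\Sigma\cap K)$; since $\Sigma$ and $-\Sigma$ are distinct parallel planes, $u$ is transversal to $\Pi$, and in fact the line $O+\lin\{u\}$ meets $\Sigma$ exactly at the centre of $\Sigma\cap K$. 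Because $\Sigma$ strictly separates $B$ from $O$, this line can be controlled against $B$, which is where one checks that $O\notin\Phi_u(B)$; in the exceptional positions of $\Sigma$ where this is not immediate, one replaces $\Sigma$ by a nearby plane of $\Omega$ for which it does hold, argues for that plane, and passes to the limit (being an affine axis of symmetry, and having similar and similarly situated parallel sections, are closed conditions). Granting $O\notin\Phi_u(B)$, Lemma \ref{vivaldi} yields $S\partial(K,u)\subseteq\Pi$. Since this shadow boundary is a closed convex curve lying in $\Pi$ whose $\Phi_u$-image is $\bd\Phi_u(K)$, it must coincide with $\bd(K\cap\Pi)$ and $\Phi_u(K)=K\cap\Pi=:K_0$; equivalently $K\subseteq K_0+\lin\{u\}$, so $K$ is the region caught between two concave caps over $K_0$ in the direction $u$, the lower cap being the reflection of the upper one in $O$.

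Next, pass by an affine map fixing $\Pi$ pointwise and carrying $u$ to a fixed transversal direction to coordinates in which $K=\{(y,z):y\in K_0\subseteq\mathbb{R}^2,\ -f(-y)\le z\le f(y)\}$, where $f$ is concave and nonnegative on $K_0$, vanishes on $\bd K_0$, and (by strict convexity) attains its maximum at a unique point; this map preserves the $O$-symmetry of $K$ and transports the conclusion of Lemma \ref{gato} to the same statement for planes parallel to the new $u$. In these coordinates the sections of $K$ parallel to $\Sigma$ are $K_0$, the super-level sets $\{f\ge c\}$ for $c>0$, and the reflections of the latter, so the lemma reduces to proving that each $\{f\ge c\}$ is a positive homothet of $K_0$ with homothety centre on $L_\Sigma$ (which here is the line through $O$ and the apex of $K$). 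The additional input is Lemma \ref{gato} together with its mirror for $-B$ (obtained from $O$-symmetry): every plane parallel to $u$ supporting $B$ or $-B$ meets $K$ in a centrally symmetric section whose centre lies in $\Pi$. Such a section sits inside a strip of $K_0+\lin\{u\}$ and touches both walls at the endpoints of the chord $\ell\cap K_0$ (with $\ell$ a tangent line to $\Phi_u(B)$, resp. $\Phi_u(-B)$); hence its centre is the midpoint of that chord, and unwinding this yields the functional relation $f(y)=f\big(y-(y_1+y_2)\big)$ for $y$ ranging over each such chord $[y_1,y_2]$ of $K_0$.

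The remaining task is to convert this family of chordwise relations into the global statement that the level sets of $f$ are dilates of $K_0$ translated along $L_\Sigma$; this is the main obstacle. The intended mechanism is to play the two tangent families --- to $\Phi_u(B)$ and to $\Phi_u(-B)$ --- against each other and, if one direction is not enough, to run the whole construction a second time with the other supporting plane of $B$ parallel to $\Pi$, which produces a second transversal direction $u'$ with $\Phi_{u'}(K)=K_0$ as well, so that every section parallel to $\Sigma$ becomes simultaneously a translate of a super-level set of two different concave caps over $K_0$; strict convexity is then used to exclude degenerate contacts. Once $\{f\ge c\}=\lambda(c)K_0+v(c)$ with $v(c)\in L_\Sigma$ is established, each section parallel to $\Sigma$ is centrally symmetric about a point of $L_\Sigma$ --- so $L_\Sigma$ is an affine axis of symmetry of $K$ --- and any two such sections are positive homothets of one another, i.e.\ similar and similarly situated; as $\Sigma\in\Omega$ was arbitrary (up to the limiting argument above), the lemma follows.
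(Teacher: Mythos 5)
Your first step (obtaining the transversal vector $u$ from $-(\Sigma\cap K)=u+(\Sigma\cap K)$ and feeding it into Lemma \ref{vivaldi}) is consistent with the paper, but after that you take a different and ultimately incomplete route. The paper does not analyse the single shadow boundary $S\partial(K,u)$ for the transversal direction and then try to reconstruct $K$ as two caps over $K_0$. Instead it applies Lemma \ref{gato} a second time: for every supporting plane $\Delta$ of $B$ parallel to the transversal vector, the centre of $\Delta\cap K$ lies in $\Pi$, so the translation vector $w$ associated with $\Delta\cap K$ is \emph{parallel} to $\Pi$. Because $\Sigma\in\Omega$ forces $B\cap\Pi=\emptyset$, one gets $O\notin\Phi_w(B)$ automatically for all such $w$ (no perturbation or limiting argument is needed, unlike for your transversal $u$), so Lemma \ref{vivaldi} makes $S\partial(K,w)$ planar for a whole family of directions $w$ parallel to $\Pi$; each of these planar shadow boundaries contains $x_1,x_2$ and hence the line $L_\Sigma$. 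The conclusion that the sections parallel to $\Sigma$ are centrally symmetric with centres on $L_\Sigma$, similar and similarly situated, is then quoted as a known consequence of having flat shadow boundaries in the directions parallel to $\Pi$ (\cite{larman}).

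The genuine gap in your proposal is exactly where you write that converting the chordwise relations into the global statement ``is the main obstacle'': you reduce the lemma to showing that the super-level sets $\{f\ge c\}$ of the cap function are homothets of $K_0$ with centres on $L_\Sigma$, but you never prove this. The information available at that point --- midpoint conditions for the chords of $K_0$ lying on the two one-parameter families of tangents to $\Phi_u(B)$ and $\Phi_u(-B)$, possibly doubled by using the second supporting plane of $B$ parallel to $\Pi$ --- is far too weak to force homothety of all level sets; turning ``every section tangent to $B$ is centrally symmetric'' into a global homothety statement is essentially the content of the Barker--Larman problem itself, so an ``intended mechanism'' of playing the two tangent families against each other cannot stand in for a proof. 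The missing idea is precisely the paper's: generate flat shadow boundaries in all directions parallel to $\Pi$, all passing through the common line $L_\Sigma$, and then invoke the classical result that such a body has similar and similarly situated sections parallel to $\Pi$.
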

\begin{proof}
               By Lemma \ref{gato}, for every support plane $\Delta$ of $B$, parallel to $v$, the center 
               of the section $\Delta \cap K$ is in $\Pi =P_v$, equivalently, the vector $w\in \Rt$ with the 
               property 
 \begin{eqnarray}\label{tepoz}
                -(\Delta \cap K)=w+(\Delta \cap K) 
\end{eqnarray}      
               is parallel to $\Pi$ (See Fig. \ref{angel}). Consequently, by the choice of $\Pi$, 
               $O\notin \Phi_{\bar{\Delta},w}(B)$, where $\bar{\Delta}$ is a plane parallel to $\Delta$, 
               $O\in \bar{\Delta}$. Thus, by Lemma \ref{vivaldi}, $S\partial(K,w)$ is contained in the 
               plane $\bar{\Delta}$. In particular, since $x_1,x_2\in S\partial(K,w)$ we have that 
               $L_\Sigma\subset \bar{\Delta}$. 

                By a continuity argument, on the one hand, we can prove that varying the support 
                plane $\Delta$ of $B$, always parallel to $v$, the vector $w$ which satisfies (\ref{tepoz}) 
                will take all the directions parallel to $\Pi$ and, on the other hand, for all plane $H$ 
                 containing $L_{\Sigma}$ there exists a support plane $\Delta$ of $B$, parallel to $H$ 
                 (thus parallel to $v$), and a direction $w$ parallel to $\Pi$ which satisfies (\ref{tepoz}), 
                 such that the shadow boundary $S\partial(K,w)$ of $K$, correspon\-ding to the direction 
                 $w$, is given as the section $H\cap K$. 

                 Therefore, all the sections of $K$ parallel to $\Sigma$ are centrally symmetric, with 
                 centre at $L_{\Sigma}$, similar and similarly situated (A proof of the latter can be found 
                 in \cite{larman}).
\end{proof}                   
                 We denote by $\lambda$ the center of $B$.
\begin{lemma}\label{orchata}
                 There exists a support plane $H$ of $B$ such that  the relation
\begin{eqnarray}\label{bonita}
       -H \cap K=-\alpha  \lambda +(H \cap K)
\end{eqnarray}
                 holds, for a real number $\alpha$. Furthermore, $H$ can be chosen such that it strictly separates $B$ and $-B$.  
 \end{lemma} 
 \begin{figure}[H]\centering
\includegraphics [width=6.0in] {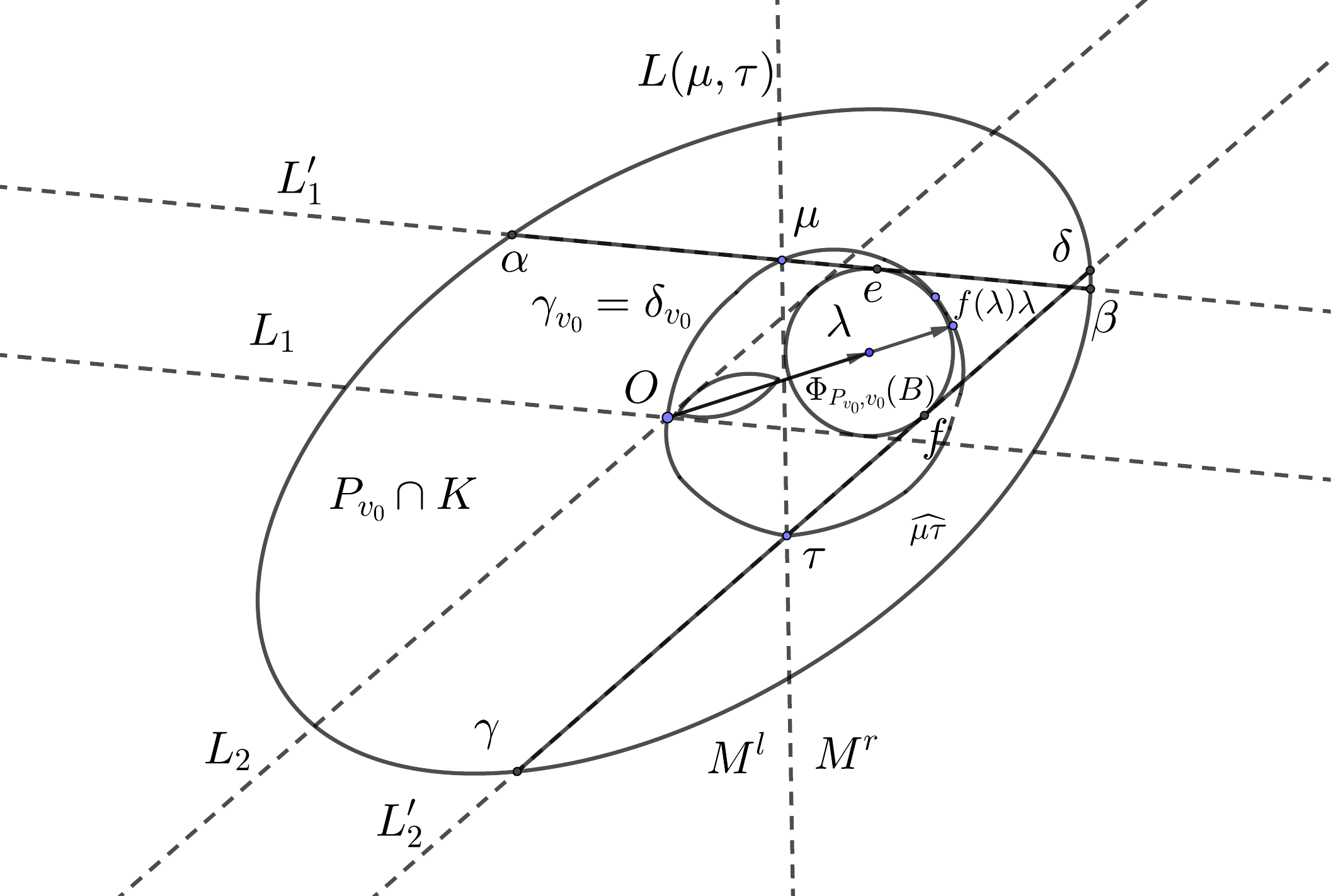} 
\caption{The point $(H\cap P_{v_0})\cap \Phi_{P_{v_0}, {v_0}}(B)$ belongs to $S_2$. }
\label{piernotas}
\end{figure}
 \begin{proof}
                 Let $\Gamma$ be a support plane of $G:=\conv[B\cup (-B)]$. Then there exists 
                 $u\in \Rt$ such that (\ref{delicia}) holds. If $u= \alpha \lambda$ for some real number 
                 $\alpha$, we almost finish, we just need to show that $\Gamma$ can be chosen such that 
                 it strictly separates $B$ and $-B$, which will be seen at the end of the proof of Lemma 
                 \ref{orchata}. Thus we suppose that $u\not= \alpha \lambda$, for every real number 
                 $\alpha$. Notice that, by virtue of the continuity of 
                 $\gamma_u$ (see Lemma \ref{gato}), for all plane $W$ passing through $O$ and 
                 parallel to $u$, there exists $v\in \gamma_u$ such that $W=P_v$. In particular, if we 
                 take $W_0:=\aff\{u,\lambda\}$, there exists $v_0\in \gamma_u$ such that $W_0=P_{v_0}$.
                 By (\ref{perro}), since $v_0\in \gamma_u$ it follows that 
                 $u \in \gamma_{v_0}\subset P_{v_0}= W_0$. By the Corollary \ref{abejita} there 
                 exists a real number $f(\lambda)$ such that $f(\lambda)\lambda \in \gamma_{v_0}$, i.e.,
                 there exists a plane $H$ passing through $O$ and parallel to $v_0$ such that 
                 (\ref{bonita}) holds for a real number $\alpha=f(\lambda)$.
 
                   Next, we are going to prove that we can choose $H$ such that it strictly separates 
                  $B$ and $-B$.  This will be carried out in a series of steps: 1) We describe the curve 
                  $\delta_{v_0}$, 2) we classify the planes $W$ such that $W$ is a support plane of $B$, 
                  parallel to $v_{0}$ and which strictly separates $B$ and $-B$ and 3) we identify where is 
                  the point $\Phi_{P_{v_0}, {v_0}}(H\cap B)$.
  
                  1) This was done at the end of Corollary \ref{abejita}.  
                  
                    2) Let $L_1,L_2$ be the support lines of 
                   $\Phi_{P_{v_0}, {v_0}}(B)$ passing through $O$ and let $L_i'$ be the support line of 
                   $\Phi_{P_{v_0}, {v_0}}(B)$ parallel to $L_i$, $i=1,2$. Let $\mu,\tau$ the mid-points of 
                   the chords $[\alpha, \beta]:=L_1'\cap (P_{v_0}\cap K)$, 
                   $[\gamma, \delta]:=L_2'\cap (P_{v_0}\cap K)$, respectively and let 
                   $e:=[\alpha, \beta] \cap \Phi_{P_{v_0}, {v_0}}(B)$ and 
                   $f:=[\gamma, \delta] \cap \Phi_{P_{v_0}, {v_0}}(B)$. The 
                   points $e,f$ divides the curve $\Phi_{P_{v_0}, {v_0}}(B)$ in two arcs which will be 
                   denoted by $S_1$, $S_2$, we choose the notation such that the reflected points 
                   $ e',f'$ of $e,f$ with respect to the center $\lambda$ of $\Phi_{P_{v_0}, {v_0}}(B)$ are 
                   in $S_1$. We denote by $M^l$ and $M^r$ the half-planes defined by the line 
                   $L( \mu,\tau)$ and we choose the notation such that $\lambda \in M^r$ (notice that 
                   $\lambda\notin L( \mu,\tau)$) (see fig. \ref{piernotas}). The planes $W$ such that $W$ 
                   is a support plane of $B$, parallel to $v_{0}$ and which strictly separates $B$ and $-B$ 
                   are those whose intersection with $P_{v_0}$ defines chords of the section 
                   $P_{v_0}\cap K$ tangent to $\Phi_{P_{v_0}, {v_0}}(B)$ and the intersection point of 
                   such chord with $\Phi_{P_{v_0}, {v_0}}(B)$ is in the arc $\widehat{e'f'}\subset S_1$.
                   
                   3) We proceed as in the proof of Corollary \ref{abejita}. Since $\gamma_{v_0}$ is 
                   a continuous curve, by (\ref{collelle}),  the point 
                   $f(\lambda) \lambda$ belongs to $M^r$, i.e., 
\begin{eqnarray}\label{flaca}
f(\lambda) \lambda \in \widehat{\mu\tau}:=\gamma_{v_0}\cap M^r
\end{eqnarray}                           
                   Let $\nu\in \gamma_{v_o}$, i.e., $\nu$ is the mid-point of the chord $[\zeta,\xi]$ of 
                   $P_{v_0}\cap K$ tangent to $\Phi_{P_{v_0}, {v_0}}(B)$. Notice if $\nu$ is in the arc 
                   $\widehat{\mu\tau}$, then the point $\theta:=[\zeta,\xi]\cap \Phi_{P_{v_0}, {v_0}}(B)$ 
                   belongs to $S_2$. Briefly,
\begin{eqnarray}\label{suculenta}
\textrm{ if }\nu \in \widehat{\mu\tau},\textrm{  then }\theta \in S_2.            
\end{eqnarray}                   
                   Thus, by (\ref{flaca}) and (\ref{suculenta}), the point 
                   $(H\cap P_{v_0})\cap \Phi_{P_{v_0}, {v_0}}(B)$ belongs to $S_2$ (observe that 
                   $(H\cap P_{v_0})\cap \Phi_{P_{v_0}, {v_0}}(B)$ is the mid-point of the chord 
                   $(H\cap P_{v_0})\cap  (P_{v_0}\cap K)$ of the section $P_{v_0}\cap K$ and it is tangent 
                   to the ellipse $\Phi_{P_{v_0}, {v_0}}(B)$).

                   Let $H'$ be a support plane of $B$ parallel to $H$. Let $u'\in \Rt$ such the relation 
                   (\ref{delicia}) holds for the planes $H'$, $- H'$. On the one hand, such as was done in 
                   the proof of the case C) of Lemma \ref{gato}, we prove that $u'$ is parallel to $f(\lambda)\lambda$. On 
                   the other hand, since $(H\cap P_{v_0})\cap \Phi_{P_{v_0}, {v_0}}(B)$ belongs to $S_2$, 
                   then $(H'\cap P_{v_0})\cap \Phi_{P_{v_0}, {v_0}}(B)$ belongs to $S_1$ (see 2)), i.e., 
                   $H'$ strictly separates $B$ and $-B$. The proof of Lemma \ref{orchata} is now complete.
                   \end{proof}
 \begin{corollary}\label{chiquitita}                
                 The line $L_H$ is affine 
                 axis of symmetry of $K$ parallel to $\lambda$ and all the sections of $K$, parallel to 
                 $H$, are similar, similarly situated, with centres at $L_H$. 
 \end{corollary} 
 \begin{proof}               
              It is a direct consequence of Lemma \ref{jamaica}.
 \end{proof}   
 \begin{figure}[H]\centering
\includegraphics [width=6.0in] {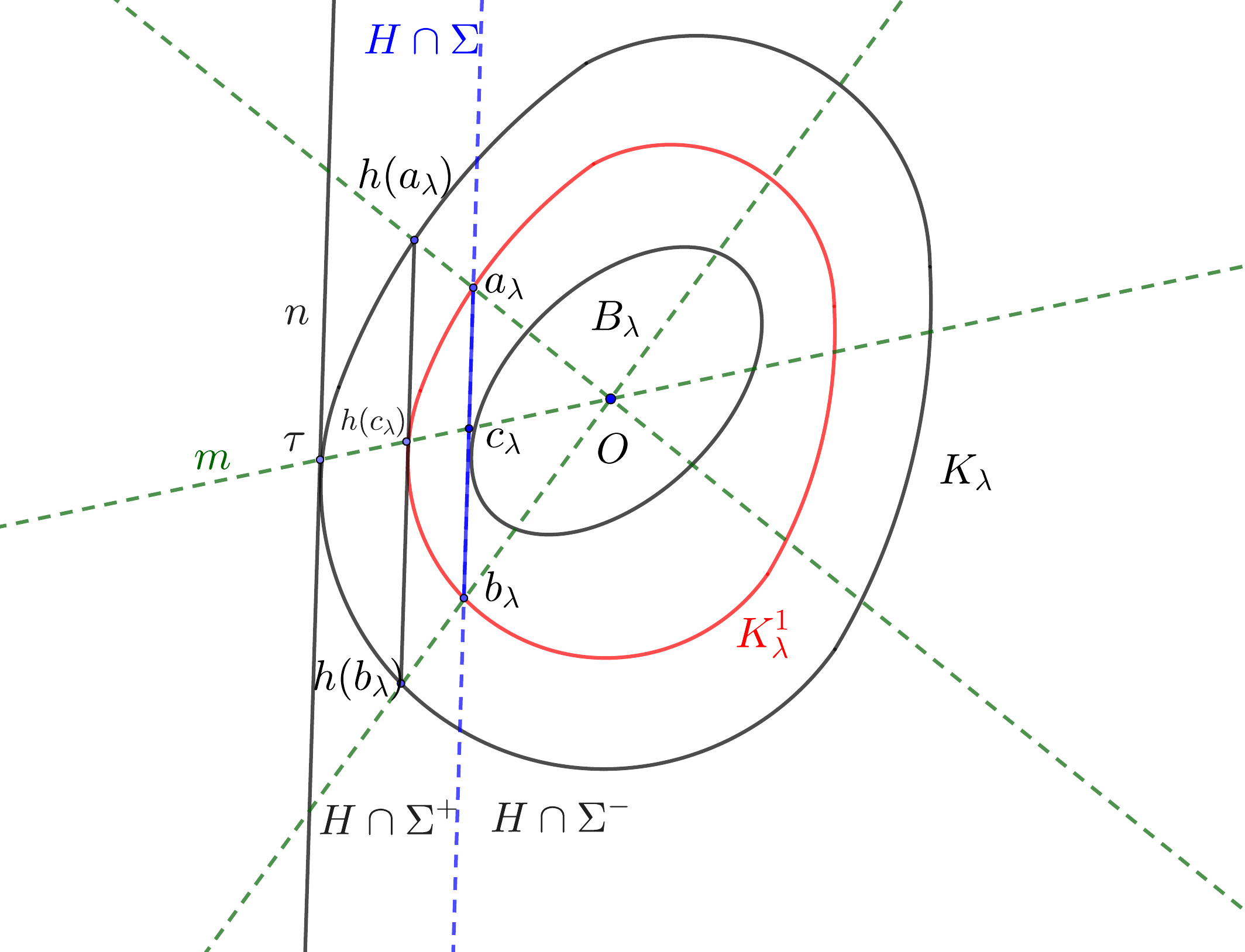} 
\caption{The convex figure 
$K_{\lambda}^{1}$ is the floating body of $K_{\lambda}$.}
\label{dey}
\end{figure}              
                  We consider an affine transformation $A:\Rt \rightarrow \Rt$ such that $A(H)=H$ 
                 and $A(H) \perp A(L_H)$, i.e. $A(L_H)$ is an axis of symmetry of 
                 $A(K)$. We will use the same notation for the geometric objects after applied them 
                 the transformation $A$, that is, we will denote by $K$ the set $A(K)$, by 
                 $L_H$ the set $A(L_H)$ and so on. 
                 
                 The following observation, whose proof is immediate, will be 
                 used several times in the next result, and this is the reason why we present it formally.
\begin{remark}\label{rock}
                 If a convex figure $K\subset \Rd$ is centrally symmetric with center $O$ and it has a 
                 line of symmetry $\mathcal{L}$ passing through $O$, then the line $\mathcal{M}$ 
                 perpendicular to $\mathcal{L}$, passing through $O$, is a line of symmetry of $K$.
\end{remark}

With the notation of Lemmas \ref{gato} and \ref{jamaica} we establish the following lemma.
\begin{lemma}\label{limon} \
 \begin{itemize} 
                  \item [i)] $H$ is a plane of symmetry of $K$. 
                  \item [ii)] for every support plane $\Sigma$ of $B$, parallel to $L_H$, the section 
                  $\Sigma \cap K$ has a line of symmetry $l_{\Sigma}$ 
                  parallel to $L_H$.
\end{itemize}
 \end{lemma}
                   \textit{Proof of} i).
                  Since $L_H$ is axis of symmetry of $K$, by Remark \ref{soul}, for all plane $\Delta$ 
                  containing $L_H$, the section $\Delta \cap K$ has the $L_H$ as  line of 
                  symmetry. On the other hand, since $K$ is centrally symmetric with center $O$, 
                  for all planes $\Delta$ containing $L_H$, the section $\Delta \cap K$ is 
                  centrally symmetric with center $O$. Then, for 
                  all plane $\Delta$ containing $L_H$, by Remark \ref{rock}, the line 
                  perpendicular to $L_H$ passing through $O$ is a line of symmetry. Varying 
                  $\Delta$, $L_H\subset \Delta$, it follows that $H$ is plane of symmetry of 
                  $K$.
                 
   \textit{Proof of} ii).
                Let $\Sigma$ be a support plane of $B$ parallel to $L_H$. By hypothesis 
                 $\Sigma \cap K$ is centrally symmetric with centre $c_{\Sigma}$. On the one hand, 
                 since $L_H$ is parallel to $\lambda$, by Lemma \ref{jamaica}, $\Sigma$ is 
                 parallel to $\lambda$, then, by Lemma \ref{gato}, $c_{\Sigma}\in H$. Hence 
                 $c_{\Sigma}\in \Sigma \cap H$. On the other hand, by virtue that $H$ is plane of 
                 symmetry of $K$ and $\Sigma$ is parallel to $L_H$, the section $\Sigma \cap K$
                  has the line $\Sigma \cap H$ as a line of symmetry. Thus, by Remark \ref{rock}, 
                  $\Sigma \cap K$ has a line of symmetry $l_{\Sigma}$ parallel to $L_H$.
                 \fin
                 
                 The following notion will be important in the proof of the next lemma. According to 
                 \cite{werner}, the \textit{convex floating body} $K_{\delta}$ of a convex body $K$ in 
                 $\Rn$ is the intersection of all halfspaces whose hyperplanes cut off a set of volume 
                 $\delta$ from $K$, for our purposes it is enough to take $\delta$ such that it satisfies 
                 the inequalities $0<\delta < \frac{\textrm{Vol } (K)}{2}$. Notice that the midpoint of 
                 every chord of $K$ which is tangent to $K_{\delta}$ belongs to $K_{\delta}$ (see for 
                 instance \cite{schutt}). 
\begin{lemma}\label{tamarindo}
        $K$ is affine equivalent to a body of revolution.    
\end{lemma}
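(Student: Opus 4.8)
The plan is to collect from Lemma \ref{jamaica} a large group of linear symmetries of $K$ and then use the structure of compact subgroups of $\mathrm{O}(3)$. For each $\Sigma\in\Omega$, Lemma \ref{jamaica} provides an affine involution $g_{\Sigma}$ of $\Rt$ with $g_{\Sigma}(K)=K$ whose set of fixed points is the line $L_{\Sigma}$; thus $g_{\Sigma}$ has eigenvalues $1,-1,-1$, that is, it is the ``affine half-turn'' about $L_{\Sigma}$. Since $K$ is $O$-symmetric, the two boundary points $\Sigma_{i}\cap\bd K$ defining $L_{\Sigma}$ are antipodal, so $O\in L_{\Sigma}$; hence $g_{\Sigma}$ fixes $O$ and is in fact linear.

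Let $G$ be the closure in $GL_{3}(\RR)$ of the subgroup generated by all $g_{\Sigma}$, $\Sigma\in\Omega$. Every element of $G$ maps $K$ onto $K$, hence preserves the unique minimal-volume ellipsoid containing $K$; so $G$ is a closed subgroup of a conjugate of $\mathrm{O}(3)$ and therefore compact. Replacing $K$ by its image under a linear map sending that ellipsoid to a round ball --- a replacement that affects neither the hypotheses nor the conclusion that $K$ is affine equivalent to a body of revolution --- we may assume $G\subseteq\mathrm{O}(3)$; then each $g_{\Sigma}$ is an orthogonal involution with eigenvalues $1,-1,-1$, i.e. a genuine rotation by $\pi$ about the axis $L_{\Sigma}$.

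Write $B=B(p,r)$; since $O\notin B$ we have $|p|>r$, and the supporting planes of $B$ lying in $\Omega$ are exactly those whose outer unit normal $n$ ranges over the open cap $U=\{n\in\mathbb{S}^{2}:\langle p,n\rangle>r\}$, so $\Sigma=\Sigma(n)$ is parametrized by $n\in U$. Here $L_{\Sigma(n)}$ is the line through the two (antipodal) points at which $K$ has supporting planes with outer normals $\pm n$, and it varies continuously with $n$ since the support function of the strictly convex body $K$ is of class $C^{1}$. Suppose first that $n\mapsto L_{\Sigma(n)}$ is non-constant on $U$; then it is not locally constant at some $n_{0}\in U$, so there are $n_{k}\to n_{0}$ in $U$ with $L_{\Sigma(n_{k})}\neq L_{\Sigma(n_{0})}$. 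If $r_{0},r_{k}\in G$ denote the $\pi$-rotations about $L_{\Sigma(n_{0})}$ and $L_{\Sigma(n_{k})}$, then $r_{k}r_{0}$ is a rotation by twice the (nonzero) angle between these axes, and by continuity that angle tends to $0$; hence $G$ contains non-identity elements arbitrarily near the identity, so it is a nondiscrete compact subgroup of $\mathrm{O}(3)$ and therefore contains a one-parameter rotation subgroup $\mathrm{SO}(2)$ about some axis. Consequently $K$ is invariant under all rotations about that axis, i.e. $K$ is a body of revolution, as required.

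The remaining possibility is that all the lines $L_{\Sigma(n)}$, $n\in U$, coincide with a single line $L_{0}$ --- equivalently, the supporting planes of $K$ with normals throughout $U$ all pass through one boundary point $p_{0}$ (and $-p_{0}$), so that $p_{0}$ is a vertex of $K$ whose normal cone has nonempty interior. I expect this degenerate case to be the main obstacle, because strict convexity alone does not forbid such a vertex: some further, more global consequence of the Barker-Larman hypothesis is needed, and this is precisely where the convex floating body $K_{\delta}$ of the preceding paragraph should be used. For $T$ linear with $T(K)=K$ one has $T(K_{\delta})=K_{\delta}$, so $K_{\delta}$ inherits all linear symmetries of $K$; moreover, for small $\delta$, $K_{\delta}$ is more regular than $K$ --- in particular it is strictly convex and has no vertex with a full-dimensional normal cone. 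The plan is to show that $K_{\delta}$ still inherits enough of the conclusion of Lemma \ref{jamaica} (via the affine covariance of the floating body) to be placed in the non-degenerate case treated above, so that $K_{\delta}$ is a body of revolution for every small $\delta>0$; then, letting $\delta\to0$ and using compactness of the set of possible axes, $K$ itself is affine equivalent to a body of revolution. Carrying out this last transfer rigorously is, I believe, the heart of the lemma.
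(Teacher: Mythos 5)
Your approach is genuinely different from the paper's: you try to assemble the affine half-turns $g_{\Sigma}$, $\Sigma\in\Omega$, into a compact subgroup of $\mathrm{O}(3)$ (after normalizing by the L\"owner--John ellipsoid) and to force a circle subgroup by non-discreteness. The non-degenerate branch of your argument is essentially sound: the $g_{\Sigma}$ are indeed linear involutions with eigenvalues $1,-1,-1$ fixing $L_{\Sigma}$, the map $n\mapsto L_{\Sigma(n)}$ is continuous by strict convexity, and a non-discrete compact subgroup of $\mathrm{O}(3)$ preserving $K$ does yield an axis of revolution. The paper instead fixes one well-chosen $\Gamma$ (by a continuity argument, so that the symmetry vector of $\Gamma\cap K$ points along the centre of $B$), shows that the projection of $K$ along $L_{\Gamma}$ coincides with the central section parallel to $\Gamma$, verifies that this planar figure has a homothetic convex floating body, and invokes Sch\"utt--Werner to conclude it is an ellipse; Lemma \ref{jamaica} then gives the body of revolution at once, with no case distinction.

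The genuine gap is the degenerate case, which you yourself flag: if all the lines $L_{\Sigma(n)}$, $n\in U$, coincide with one line $L_{0}$, your group $G$ is generated by a single half-turn and the argument produces nothing. Strict convexity does not exclude this (a corner point may have a normal cone with nonempty interior), so the case must be dealt with, and your proposed repair via the floating body cannot work as described. The only symmetries you have in hand are the maps $g_{\Sigma}$ themselves; since any linear $T$ with $T(K)=K$ satisfies $T(K_{\delta})=K_{\delta}$, the floating body $K_{\delta}$ is preserved by exactly the same involutions, whose fixed-point lines are intrinsic to the maps and all equal $L_{0}$. Regularizing the body therefore cannot separate the coincident axes or produce new ones; to get more symmetries of $K_{\delta}$ you would have to re-derive Lemma \ref{jamaica} for $K_{\delta}$, but $K_{\delta}$ is not known to satisfy the Barker--Larman condition for any ball. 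So the ``transfer'' you defer is not merely unfinished --- the mechanism you propose for it is structurally unable to close the gap. To rule out the degenerate configuration one needs a quantitative input beyond the symmetry group, which is precisely what the paper extracts from Lemma \ref{jamaica} (parallel sections similar, similarly situated, centred on $L_{\Gamma}$) together with the Sch\"utt--Werner characterization of ellipses by homothetic floating bodies.
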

 \begin{proof}
                   Since $H$ is plane of symmetry of $K$ the relation
\begin{eqnarray}\label{manguito}
         S\partial(K,L_H)=H\cap \bd K    
\end{eqnarray}
                  holds. By (\ref{manguito}) we conclude that  the projection 
                 $K_{\lambda}:=\Phi_{\lambda}(K)$ 
                  is equal to the section $H\cap \bd K$. Now we are going to prove that the projection 
                  $K_{\lambda}$ is an ellipse. After this, by Corollary \ref{chiquitita}, we would have that all the sections   
                  $H_1\cap K$ are homothetic ellipses and with centers at $L_H$, where $H_1$ is a plane parallel to $H$ and such 
                  that $H_1\cap \inte K\not= \emptyset$, i.e., $L_H$ is an affine axis of revolution of $K$.
                  
                   Let $\Sigma$ be a plane parallel to $L_H$ and tangent to $B$. We denote by 
                  $\Sigma^{+}$ and $\Sigma^{-}$ the half-spaces defined by $\Sigma$, choosing the 
                  notation such that $O\in \Sigma^{-}$. Let $H_1$ be a plane parallel to $H$ and such 
                  that 
\begin{eqnarray}\label{ale}
                  H_1\cap \inte (\Sigma \cap K)\not= \emptyset.
\end{eqnarray}
                  We denote by $a,b$ the extreme points of the chord $ H_1 \cap (\Sigma  \cap K)=\Sigma  \cap (H_1\cap K)$. 
                  Let $W_{\Sigma}$ be the plane containing the parallel lines $l_{\Sigma}$ and 
                  $L_H$. Since $l_{\Sigma}$ is line of symmetry of $ \Sigma \cap K$ the chord 
                  $ab$ has his mid-point $c$ at $l_{\Sigma}\subset W_{\Sigma}$. It follows that the 
                  chord with extreme points $a_{\lambda}:=\Phi_{ \lambda}(a)$,
                  $b_{\lambda}:=\Phi_{\lambda}(b)$ of $K_{\lambda}^{1}:=\Phi_{\lambda}(H_1 \cap K)$ 
                  has its mid-point $c_{\lambda}:=\Phi_{\lambda}(c)$ at $m:=W_{\Sigma}\cap H$ and 
                  it is tangent to the ellipse $B_{\lambda}:=\Phi_{\lambda}(B)=\Phi_{\lambda}(-B)$ (See Fig. \ref{dey}). 
                  Notice that the last equality is due to the choice of $H$ (see relation (\ref{bonita})) and the fact that 
                  $L_H$ is parallel to $\lambda$. 

                 By Corollary \ref{chiquitita}, $H\cap K=K_{\lambda}$ and $H_1 \cap K$ are homothetic and centrally 
                 symmetric with centers at $L_H$, then $K_{\lambda}$ and $K_{\lambda}^{1}$ 
                 are homothetic with center of homothety $O$. Let $h:H \rightarrow H$ be the 
                 homothecy, with center of homothety $O$, such that 
                 $h(K_{\lambda}^{1}))=K_{\lambda}$. We conclude that the chord 
                 $h(a_{\lambda})h( b_{\lambda})$ of $K_{\lambda}$ has its mid-point at $m$. 

                 Varying $H_1$, always parallel to $H$ and such that the relation (\ref{ale}) holds, 
                 We get to the following conclusions: 
\begin{itemize}
                  \item [1)] All the chords of $K_{\lambda}$, parallel to the line $H \cap \Sigma$ and 
                  contained in the half-plane $H \cap \Sigma^{+}$, have its mid-point at the line $m$ 
                  (See Fig. \ref{dey}). 
                  \item [2)] There exist a supporting line $n$ of $K_{\lambda}$ parallel to the line 
                  $H \cap \Sigma$ at the point $\tau$, where $\tau \in m \cap \bd K_{\lambda}$ and 
                  $\tau \in H \cap \Sigma^{+}$. 

                   \item [3)] There exist a supporting line $w$ of $K_{\lambda}^1$ at the point $r$ such 
                   that $w$ is parallel to the line $H \cap \Sigma$ and the chord $w\cap K_{\lambda}$ 
                   has its mid-point at $r$. 

                   \item [4)] For all plane $H_1$, parallel to $H$ and close enough to $H$, the 
                   convex figure $K_{\lambda}^{1}$ is the floating body of $K_{\lambda}$.
\end{itemize}
                   By 4) and Theorem 1 of \cite{werner} it follows that $K_{\lambda}$ is an ellipse. 
\end{proof}

\subsection{Proof of Theorem \ref{mozart}.}
                In this section, we are going to prove Theorem \ref{mozart} in dimension 3. We denote 
                by $A(3)$ and $O(3)$ the sets of all the affine and orthogonal transformations from 
                $\Rt$ to $\Rt$, respectively. By virtue of Lemma \ref{tamarindo} there exists  $A\in A(3)$ 
                such that $G:=A(L_H)$ is an axis of revolution of $\bar{K}:=A(K)$. By Lemma 
                \ref{jamaica}, for $\Sigma \in \Omega$, the line $L_{\Sigma}$ is an affine axis of 
                symmetry of $K$. Thus the line $J_{\Sigma}:=A(L_{\Sigma})$  is an affine axis of 
                symmetry of $\bar{K}$.
                Let 
\[
\Lambda_{\Sigma}=\{T(J_{\Sigma}):T\in O(3) \textrm{ such that  } T(G)=G\}.
\]
                For each line $Q\in \Lambda_{\Sigma}$, on the one hand, let $A_Q\in A(3)$ such that 
                $\bar{Q}=A_Q(Q)$ is an axis of symmetry of $\bar{K}_{Q}:=A_Q(\bar{K})$ (and, 
                consequently, $A_{Q}(G)$ is an affine axis of revolution of $\bar{K}_{Q}$). On the other 
                hand, let $R_{\bar{Q}}\in O(3)$ be the rotation with axis $\bar{Q}$ by an angle $\pi$. 
                Thus $R_{\bar{Q}}(\bar{K}_{Q})=\bar{K}_{Q}$. Let 
\[
\mathcal{A}_{\Sigma}=\{A_Q \in A(3):Q\in \Lambda_{\Sigma}\},
\textrm{    }\textrm{    } \mathcal{B}_{\Sigma}=\{R_{\bar{Q}} \in O(3): Q\in \Lambda_{\Sigma}\}  \textrm{ }
\]
and
\[ 
 \textrm{ }\mathcal{C}_{\Sigma}=\{A_{Q}^{-1}(R_{\bar{Q}}(A_{Q}(G))):Q\in \Lambda_{\Sigma}\}.
\]
                  We can illustrate these definitions with the following diagram:
\[
K \stackrel{A }{\longrightarrow} \bar{K} \stackrel{A_{Q} }{\longrightarrow} \bar{K}_{Q} \stackrel{R_{\bar{Q}} }{\longrightarrow} \bar{K}_{Q}  \stackrel{ A^{-1}_{Q} }{\longrightarrow} \bar{K}  \stackrel{A^{-1} }{\longrightarrow} K 
\]
                  Suppose that there exists $\Sigma_1\in \Omega$ such that, for all 
                  $Q\in \Lambda_{\Sigma_1}$, $ A_Q=\textrm{id}$ and $\Lambda_{\Sigma_1}$ is the 
                  \textit{equator} of $\bar{K}$. Thus $\mathcal{C}_{\Sigma_1}$ consist just of the element 
                  $G$. On the one hand, $ A_Q(G)=\textrm{id}(G)=G$, and, on the other hand, 
                  by virtue that $G$ and $Q$ are orthogonal, $R_{\bar{Q}}(G)=G$. Hence
\[
A_{Q}^{-1}(R_{\bar{Q}}(A_{Q}(G)))=A_{Q}^{-1}(R_{\bar{Q}}(\textrm{id}(G)))=A_{Q}^{-1}(R_{\bar{Q}}( G))=A_{Q}^{-1}(G)=G.
\]
                  Let $\Sigma_2\in \Omega$, $\Sigma_1\not=\Sigma_2$. Suppose now that $\Sigma_2$ 
                  is such that $J_{\Sigma_2}:=A(L_{\Sigma_2})$ is an axis of symmetry of $\bar{K}$ (i.e., 
                  $ A_{\Sigma_2}=\textrm{id}$) such that $J_{\Sigma_2}$ does not belong to the equator 
                  $\Lambda_{\Sigma_1}$. Then $\bar {K}$ is a sphere by virtue of having two different 
                  axes of revolution, namely, $G$ and   $R_{J_{\Sigma_2}}(G)$. Consequently, $K$ is 
                  an ellipsoid.
 
                  Now we assume that $J_{\Sigma_2}:=A(L_{\Sigma_2})$ is an affine axis of symmetry, 
                  i.e., $ A_{\Sigma_2}\not=\textrm{id}$. We denote by $\tau$ the collection of planes 
                  containing $G$. For each $\Pi \in \tau$, we will call the set $\Pi \cap \bar{K}$ a 
                  \textit{meridian} of $\bar{K}$. For each $Q \in  \Lambda_{\Sigma_2}$, we denote by 
                  $\zeta_Q$ the meridian passing through $Q$. Since, for every 
                  $Q\in \Lambda_{\Sigma_2}$, the relation 
\[
A_{Q}^{-1}(R_{\bar{Q}}(A_{Q}(\bar{K})))=A_{Q}^{-1}(R_{\bar{Q}}(\bar{K}_Q))=A_{Q}^{-1}( \bar{K}_Q)=
A_{Q}^{-1}(A_Q(\bar{K}))=\bar{K} 
\]
                  holds, every element of $\mathcal{C}_{\Sigma_2}$ is an affine axis of revolution of 
                  $\bar{K}$. On the other hand, notice that for $Q\in \Lambda_{\Sigma_2}$,  
                  $A_{Q}^{-1}(R_{\bar{Q}}(A_{Q}(G)))\subset \zeta_Q$. Consequently, $\bar{K}$ has an 
                  infinite number of affine axes of revolution. By Theorem 1 of \cite{espanol}, the only 
                  convex body with an infinite number of affine axes of revolution is the ellipsoid. Hence 
                  $\bar{K}$ is an ellipsoid. Therefore, $K$ is an ellipsoid.  

                  If $\mathcal{C}_{\Sigma_1}$ no consist of just one element, then $K$ has an infinite 
                  number of affine axes of revolution (see the previous paragraph) and, again by Theorem 
                  1 of \cite{espanol}, it  follows that $K$ is an ellipsoid.

                  
\section{Proof of Theorem \ref{mozart} in dimension $>3$.}
                  We will assume that Theorem \ref{mozart} holds in dimension $n-1$ and we will prove 
                  that it holds in dimension $n$, $n\geq 4$. We take a system of coordinates such $O$ is 
                  the origin. Let $\Pi$ be a hyperplane that separates $B$ from $O$. 

\begin{lemma}\label{chiquita}
                  For every vector $u\in \mathbb{S}^{n-1}$ parallel to $\Pi$ the projection $\Phi_u(K)$ is 
                  an $(n-1)$-ellipsoid.
\end{lemma}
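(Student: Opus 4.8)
The plan is to descend the Barker--Larman condition to the projection $\phi_u(K)$ and then invoke Theorem \ref{mozart} in dimension $n-1$. Fix $u\in\Sn$ parallel to $\Pi$ and set $K_u:=\phi_u(K)$, $B_u:=\phi_u(B)$, regarded as convex sets inside $u^{\perp}\cong\mathbb{R}^{n-1}$. First I would record the structural facts that are essentially automatic: $K_u$ is an $O$-symmetric convex body because $\phi_u$ is linear and open; $B_u$ is a ball of the same radius as $B$, since the orthogonal projection of a Euclidean ball onto a linear subspace is a Euclidean ball; and $B_u\subset\inte K_u$ because $B\subset\inte K$. To see $O\notin B_u$, I would note that the line $\{tu:t\in\RR\}$ is parallel to $\Pi$ and passes through $O$, hence lies in the open half-space bounded by $\Pi$ that contains $O$; since $\Pi$ separates $B$ from $O$, this line misses $B$, and therefore $O=\phi_u(O)\notin\phi_u(B)=B_u$. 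Strict convexity of $K_u$ would follow from that of $K$: a segment in $\bd K_u$ would lie in some supporting hyperplane $\Pi'$ of $K_u$ in $u^{\perp}$, the preimage $H:=\phi_u^{-1}(\Pi')$ would be a supporting hyperplane of $K$ parallel to $u$ with $\phi_u(H\cap K)$ equal to that segment, so the face $H\cap K$ would contain a segment, contradicting strict convexity of $K$.

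The core of the argument is a transfer of sections. For every supporting hyperplane $\Pi'$ of $B_u$ inside $u^{\perp}$, with outer unit normal $n'\in u^{\perp}$, the hyperplane $H:=\{x\in\Rn:\langle x,n'\rangle=\max_{b\in B}\langle b,n'\rangle\}$ is a supporting hyperplane of $B$, it is parallel to $u$ (because $n'\perp u$), and $\phi_u(H)=\Pi'$. A fibrewise computation then gives $\Pi'\cap K_u=\phi_u(H\cap K)$: a point $p\in u^{\perp}$ lies in $\Pi'\cap K_u$ precisely when its fibre $\phi_u^{-1}(p)$ is contained in $H$ and meets $K$, i.e. when $\phi_u^{-1}(p)\cap(H\cap K)\neq\emptyset$. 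By the Barker--Larman hypothesis on $K$, the section $H\cap K$ is centrally symmetric, and linear images of centrally symmetric sets are centrally symmetric, so every section $\Pi'\cap K_u$ is centrally symmetric; moreover it is a genuine section because $\Pi'$ meets $\bd B_u\subset\inte K_u$. Hence $K_u$ is an $O$-symmetric strictly convex body in $\mathbb{R}^{n-1}$ satisfying the Barker--Larman condition for the ball $B_u$, with $O\notin B_u$, and the inductive hypothesis applies: $\phi_u(K)=K_u$ is an $(n-1)$-ellipsoid.

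I expect the delicate points to be the two transfer statements — that every supporting hyperplane of the projected ball $B_u$ lifts to a supporting hyperplane of $B$ parallel to $u$, and that sections of $K_u$ by such hyperplanes coincide with the $\phi_u$-images of the corresponding sections of $K$ — together with the verification $O\notin\phi_u(B)$, which is exactly where the hypotheses ``$u$ parallel to $\Pi$'' and ``$\Pi$ separates $B$ from $O$'' enter. None of these is conceptually deep; the main obstacle is simply carrying out the fibre bookkeeping cleanly, and being careful that the projection preserves strict convexity and central symmetry so that the inductive hypothesis is genuinely applicable.
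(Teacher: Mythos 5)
Your proposal is correct and follows essentially the same route as the paper: project, observe that the Barker--Larman condition descends to $\phi_u(K)$ with respect to the ball $\phi_u(B)$ because supporting hyperplanes of $\phi_u(B)$ in $u^{\perp}$ lift to supporting hyperplanes of $B$ parallel to $u$ (these being $\phi_u$-saturated, so that section and projection commute), and apply the inductive hypothesis. You additionally spell out details the paper leaves implicit --- notably that $\phi_u(K)$ inherits strict convexity, which is genuinely needed to invoke Theorem \ref{mozart} in dimension $n-1$ --- so your write-up is a more complete version of the same argument.
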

\begin{proof}
                  We claim that the $O$-symmetric strictly convex body $K':=\Phi_u(K)$ satisfies the 
                  Barker-Larman conditions with respect to sphere $B':=\Phi_u(B)$. By the choice of $\Pi$, 
                  notice that $O\notin B'$. Every $(n-2)$-section of $ K'$ given by support 
                  $(n-2)$-plane of $B'$ can be considered as the projection of one $(n-1)$-section 
                  of $K$ given by a support hyperplane of $B$ and pa\-ra\-llel to $u$. Since every 
                  orthogonal projection of a centrally symmetric body is a centrally symmetric body in 
                  dimension $n-1$, $K'$ satisfies the Barker-Larman conditions with respect to sphere $B'$.
                  
                  In order to show that $B'$ is suitable for $K'$ let $\Gamma'$ be a support $(n-2)$-plane of 
                   $G':=\conv[ B'\cup (-B')]$, we must prove that the relation
\begin{eqnarray}\label{mana}
                  B'\subset \conv (K'_{\Gamma'} \cup K'_{-\Gamma'} )  
\end{eqnarray} 
                  holds, where $K'_{\Gamma'}:=\Gamma' \cap K' $ and 
                  $K'_{-\Gamma'}:=-\Gamma' \cap K $. Let $\Gamma$ be a support hyperplane of $B$ 
                  parallel to $u$ such that $\Phi_u(\Gamma)=\Gamma'$. By virtue of $B$ is suitable 
                  for $K$ the relation (\ref{jazzu}) holds. Consequently,  
\[                  
                  B'=\Phi_u( B)\subset \Phi_u(\conv (K_{\Gamma} \cup K_{-\Gamma} )).  
\]                
                  Rewriting the right-hand side of this inclusion, noting that 
                  $\Phi_u(K_{\Gamma})=K'_{\Gamma'}$ and $\Phi_u(K_{-\Gamma})=K'_{-\Gamma'}$, it follows that
\[
\Phi_u(\conv (K_{\Gamma} \cup K_{-\Gamma}))=\conv (\Phi_u(K_{\Gamma}) \cup \Phi_u(K_{-\Gamma}))=\conv (K'_{\Gamma'} \cup K'_{-\Gamma'}). 
\]
                  Therefore, 
\[
B'\subset \conv (K'_{\Gamma'} \cup K'_{-\Gamma'}),
\]                
                  i.e., the relation (\ref{mana}) holds. 
                  
                  By virtue of the induction hypothesis, $\Phi_u(K)$ is an $(n-1)$-ellipsoid.     
\end{proof}
                   Let $\Pi$ be a hyperplane that separates $B$ from $O$. Let $\Gamma$ be a 
                   hyperplane parallel to $\Pi$, $O\in \Gamma$. Let $\Pi_1,\Pi_2$ be support hyperplanes 
                   of $K$ at the points $x_1,x_2\subset \bd K$, respectively, parallel to $\Pi$. Let 
                   $L:=L(x_1, x_2)$.
\begin{lemma}\label{u2}
$\Gamma$ The relation
\begin{eqnarray}\label{jacki}
\Gamma \cap \bd K=S\partial(K,L)
\end{eqnarray}
holds.
 \end{lemma}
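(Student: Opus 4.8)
The plan is to deduce \eqref{jacki} from Lemma~\ref{chiquita}, which makes every projection of $K$ along a direction parallel to $\Pi$ an $(n-1)$-ellipsoid; the identity \eqref{jacki} is then the ``all directions at once'' form of the classical fact that in an ellipsoid the shadow boundary with respect to a diameter is the conjugate central section. First I would settle the combinatorics. Since $K$ is strictly convex and $O$-symmetric, the two supporting hyperplanes of $K$ parallel to $\Pi$ are $\Pi_1$ and $\Pi_2=-\Pi_1$, and each touches $\bd K$ at a single point, so $x_2=-x_1$; hence $O\in L$, and $L\not\subset\Gamma$ because $\Pi_1\ne\Gamma$ (the hyperplane $\Gamma$ passes through $O\in\inte K$, so it is not a supporting hyperplane). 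Choose coordinates with $O$ at the origin and $\Gamma=\{x_n=0\}$. Then a unit vector $u$ is parallel to $\Pi$ iff $\langle u,e_n\rangle=0$; for such $u$ the orthogonal projection $\phi_u$ preserves the $n$-th coordinate, $\phi_u(\Gamma)=\Gamma\cap u^{\perp}$, $\phi_u(\Pi_i)=\Pi_i\cap u^{\perp}$, and $\phi_u(L)$ is a genuine line through $O$ (indeed $u$ is parallel to $\Gamma$, hence not parallel to the line $L$, which is not contained in $\Gamma$).

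Next I would record the one ellipsoidal fact needed. Fix $u$ parallel to $\Pi$ and put $E_u:=\phi_u(K)$, an $(n-1)$-ellipsoid centred at $O$ by Lemma~\ref{chiquita}; write $E_u=\{y\in u^{\perp}:\langle Ay,y\rangle\le1\}$ with $A$ a symmetric positive definite operator on $u^{\perp}$. Because $u\parallel\Pi_i$, the flat $\Pi_i\cap u^{\perp}$ supports $E_u$ at $\phi_u(x_i)$ and is parallel to $\Gamma\cap u^{\perp}$; thus $\phi_u(x_1)$ and $\phi_u(x_2)=-\phi_u(x_1)$ are the poles of $E_u$ conjugate to $\Gamma\cap u^{\perp}$, and the diameter $\phi_u(L)$ they span has direction $A^{-1}e_n$. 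A point $y\in\bd E_u$ lies in $S\partial(E_u,\phi_u(L))$ exactly when its tangent hyperplane, with normal $Ay$, is parallel to $\phi_u(L)$, i.e. when $\langle Ay,A^{-1}e_n\rangle=0$; by self-adjointness of $A$ this reads $\langle y,e_n\rangle=0$, so
\begin{equation}\label{auxell}
S\partial\bigl(E_u,\phi_u(L)\bigr)=\bd E_u\cap\bigl(\Gamma\cap u^{\perp}\bigr).
\end{equation}

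Finally I would prove the two inclusions by projecting down to, and pulling back from, a suitable $E_u$. For any $x\in\bd K$ and any supporting hyperplane $H$ of $K$ at $x$, the direction spaces of $H$ and $\Pi$ meet in dimension at least $n-2\ge2$, so I can pick a unit vector $u$ parallel to both $\Pi$ and $H$; then $x\in S\partial(K,u)$, hence $\phi_u(x)\in\bd E_u$, and $\phi_u(H)=H\cap u^{\perp}$ supports $E_u$ at $\phi_u(x)$. If moreover $x\in\Gamma\cap\bd K$, then (as $\phi_u$ fixes the $n$-th coordinate) $\phi_u(x)\in\Gamma\cap u^{\perp}$, so by \eqref{auxell} there is a supporting $(n-2)$-flat $F$ of $E_u$ at $\phi_u(x)$ parallel to $\phi_u(L)$; the hyperplane $\phi_u^{-1}(F)=F+\lin\{u\}$ then supports $K$ at $x$ and is parallel to $L$, whence $x\in S\partial(K,L)$. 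Conversely, if $x\in S\partial(K,L)$, take $H$ as above with $H\parallel L$; then $\phi_u(H)=H\cap u^{\perp}$ is parallel to $\phi_u(L)$, so $\phi_u(x)\in S\partial(E_u,\phi_u(L))$, and \eqref{auxell} forces $\langle\phi_u(x),e_n\rangle=0$, i.e. $x\in\Gamma$. Together with the trivial inclusion $S\partial(K,L)\subset\bd K$ this yields \eqref{jacki}. I do not anticipate a real obstacle: the substance lies entirely in Lemma~\ref{chiquita} (the only place the inductive hypothesis, Theorem~\ref{mozart} in dimension $n-1$, enters, and the reason $n\ge4$ is required), while the remaining work is just the projection--pullback dictionary and the elementary ellipsoid identity \eqref{auxell}, neither of which presents any difficulty.
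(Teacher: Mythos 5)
Your proposal is correct and follows essentially the same route as the paper: project along a direction $u$ parallel to both $\Pi$ and a supporting hyperplane at the point in question, invoke Lemma~\ref{chiquita} to make $\phi_u(K)$ an ellipsoid, and use the identity $S\partial(\phi_u(K),\phi_u(L))=\phi_u(\Gamma)\cap\phi_u(K)$ (the paper's (\ref{barbi})) to transfer the conclusion back to $K$ in both directions. Your explicit quadratic-form verification of that identity and the remark that $u$ exists because the direction spaces of $H$ and $\Pi$ meet in dimension at least $n-2$ merely fill in details the paper leaves implicit.
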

\begin{proof}
                    WLG We can assume that $L\perp \Pi$. Let $z\in \Gamma \cap \bd K$ and let $\Delta$ 
                    be a supporting hyperplane of $K$ at $z$. Let $w$ be a unit vector parallel to 
                    $\Delta \cap \Pi$. Let 
\[
  K':=\Phi_w(K), z':=\Phi_w(z), \Gamma':=\Phi_w(\Gamma), \Pi'_i:=\Phi_w(\Pi_i), i=1,2.
\]  
                    By Lemma \ref{chiquita}, $K'$ is an ellipsoid. Since $\Phi_w(L)=L$ and $\Gamma'$ is 
                    parallel to the support $(n-2)$-planes $\Pi'_1$, $\Pi'_2$ of $K'$,   $O\in \Gamma'$, it 
                    follows that
\begin{eqnarray}\label{barbi}
S\partial (K',L)= \Gamma' \cap K'.    
\end{eqnarray}
                    Thus there exists a unique support $(n-2)$-plane $\Sigma$ of $K'$ at $z'$ parallel to 
                    $L$. Consequently, $\Delta=\Phi_{w}^{-1}(\Sigma)$ is parallel to $L$. Therefore 
                    $z\in S\partial (K,L)$. Hence $\Gamma \cap \bd K \subset S\partial (K,L)$.

                    On the other hand, let $z\in S\partial (K,L)$. Let $\Delta$ be a supporting hyperplane 
                    of $K$ at $z$ parallel to $L$. Let $w$ be a unit vector parallel to $\Delta \cap \Pi$. 
                    We use the same notation as above. By Lemma \ref{chiquita}, $K'$ is an ellipsoid. 
                    By virtue that $\Phi_w(\Delta)$ is parallel to $L$ and since the relation (\ref{barbi}) 
                    holds, it follows that $z'\in \Gamma'\cap K'$. Thus $z\in \Gamma \cap \bd K$, that is, 
                    $S\partial (K,L)\subset \Gamma \cap \bd K$.
\end{proof}
\begin{lemma}\label{teta}
                    There exists an affine transformation $A:\Rn \rightarrow \Rn$ such that 
                    $A(\Gamma)=\Gamma$, $A(L)\perp \Gamma$ and the ellipsoids 
                    $\{\Phi_u(A(K)): u\in (\mathbb{S}^{n-1}\cap \Gamma) \}$ are 
                    congruent ellipsoids of revolution with axis $A(L)$, that is, $A(K)$ is an ellipsoid of 
                    revolution with axis $A(L)$. 
\end{lemma}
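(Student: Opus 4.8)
\emph{Proof proposal.} The plan is to show that, after a suitable affinity, $K$ is obtained by stacking homothetic copies of its central section $\Gamma\cap K$ along a line perpendicular to $\Gamma$; rounding that section then exhibits $K$ as an ellipsoid of revolution, from which the statement about the projections $\phi_u(K)$ follows at once. Throughout, $\phi_u$ denotes the orthogonal projection of $\Rn$ along $u$, and we use the facts already available: Lemma \ref{chiquita} ($\phi_u(K)$ is an $(n-1)$-ellipsoid whenever $u$ is parallel to $\Gamma$) and Lemma \ref{u2} ($\Gamma\cap\bd K=S\partial(K,L)$).

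First I would normalize. Since $K$ is $O$-symmetric, the touching points satisfy $x_2=-x_1$, so $L$ passes through $O$, and $L\not\subset\Gamma$; hence there is an affinity $A_1$ with $A_1(\Gamma)=\Gamma$ carrying $L$ to the line $L_0$ through $O$ perpendicular to $\Gamma$. Replacing $K$ by $A_1(K)$ we may assume $L=L_0\perp\Gamma$; fix coordinates so that $\Gamma=\{x_n=0\}$ and $L$ is the $x_n$-axis. Put $E_0:=\Gamma\cap K$, a full-dimensional strictly convex body in $\Gamma$ (as $O\in\inte K$). For $u\in\Sn$ parallel to $\Gamma$ one has $u_n=0$, so $\phi_u$ preserves the $x_n$-coordinate, $\phi_u(\Gamma)=\Gamma\cap u^{\perp}$, and every $\phi_u$-fibre over a point of $\phi_u(\Gamma)$ lies in $\Gamma$; consequently $\phi_u(\Gamma)\cap\phi_u(K)=\phi_u(E_0)$. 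By Lemma \ref{chiquita} the right-hand side is the intersection of an $(n-1)$-ellipsoid with a hyperplane, hence an $(n-2)$-ellipsoid. Thus every orthogonal projection of the $(n-1)$-dimensional body $E_0$ onto a hyperplane of $\Gamma$ is an ellipsoid, and since $n-1\ge 3$ the classical characterization of ellipsoids through their projections forces $E_0$ to be an $(n-1)$-ellipsoid.

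Next I would locate $L$ inside each projection. Fix $u$ parallel to $\Gamma$. By Lemma \ref{u2}, $\Gamma\cap\bd K=S\partial(K,L)$; pulling supporting hyperplanes back through $\phi_u$ gives $S\partial(\phi_u(K),L)\subseteq\phi_u(\Gamma\cap\bd K)\subseteq\phi_u(\Gamma)\cap\phi_u(K)=\phi_u(E_0)$. But $S\partial(\phi_u(K),L)$ is an $(n-2)$-dimensional ellipsoidal surface spanning the diametral hyperplane of the ellipsoid $\phi_u(K)$ conjugate to $L$; being contained in the $(n-2)$-flat $\phi_u(\Gamma)$, that diametral hyperplane must equal $\phi_u(\Gamma)=\Gamma\cap u^{\perp}$. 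Hence $L$ is a principal axis of $\phi_u(K)$ and $\phi_u(E_0)$ is the corresponding central section, so the sections of $\phi_u(K)$ by the hyperplanes $\{x_n=t\}$ are homothets of $\phi_u(E_0)$:
\[
\phi_u(K)\cap\{x_n=t\}=(0,\dots,0,t)+\sqrt{1-t^2/h_u^{\,2}}\;\phi_u(E_0),
\]
where $h_u$ is the $x_n$-semiaxis of $\phi_u(K)$. Because $\phi_u$ does not change the $x_n$-coordinate, $h_u$ equals half the $x_n$-extent of $K$, a number $h$ independent of $u$. Also $\phi_u(K\cap\{x_n=t\})=\phi_u(K)\cap\{x_n=t\}$ for the same reason, so $\phi_u(K\cap\{x_n=t\})=\phi_u\bigl((0,\dots,0,t)+\sqrt{1-t^2/h^2}\,E_0\bigr)$ for every $u$ parallel to $\Gamma$.

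Finally I would slice $K$ and round. As $u$ ranges over the unit vectors parallel to $\Gamma$, the maps $\phi_u$ realize \emph{all} orthogonal projections of the $(n-1)$-flat $\{x_n=t\}$ onto its hyperplanes, and a convex body is the intersection of the solid cylinders over such projections (an elementary separation argument); hence the two convex bodies in $\{x_n=t\}$ with identical shadows agree, $K\cap\{x_n=t\}=(0,\dots,0,t)+\sqrt{1-t^2/h^2}\,E_0$, so $K=\{(z',z_n):z'\in\sqrt{1-z_n^2/h^2}\,E_0,\ |z_n|\le h\}$. Now pick a linear map $A_2$ fixing $L_0$ pointwise and carrying the ellipsoid $E_0\subset\Gamma$ onto a round ball of some radius $R$ centred at $O$, and set $A:=A_2A_1$. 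Then $A(\Gamma)=\Gamma$, $A(L)=L_0\perp\Gamma$, and $A(K)=\{(z',z_n):|z'|^2/R^2+z_n^2/h^2\le1\}$ is the ellipsoid of revolution with axis $A(L)$, equatorial radius $R$ and polar semiaxis $h$; and for $u$ parallel to $\Gamma$ the projection $\phi_u(A(K))$ is an equatorial projection of this solid of revolution, i.e. the $(n-1)$-ellipsoid of revolution with the same axis, polar semiaxis $h$ and equatorial radius $R$, so these are pairwise congruent. This is exactly the assertion.

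The step I expect to be the main obstacle is the one in the third paragraph: squeezing out of Lemma \ref{u2} that $L$ is a principal axis of \emph{every} projection $\phi_u(K)$, and then combining this with the fact that $\phi_u$ preserves the $x_n$-coordinate to obtain the single scaling function $\sqrt{1-t^2/h^2}$ valid simultaneously for all $u$ — it is precisely this uniformity that forces all the slices of $K$ to be homothets of $E_0$ and hence makes $K$ an affine body of revolution.
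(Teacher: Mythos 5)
Your argument is correct and follows essentially the same route as the paper: show that the central section $\Gamma\cap K$ is an ellipsoid because all of its hyperplane projections are ellipsoids (your fibre identity $\phi_u(\Gamma)\cap\phi_u(K)=\phi_u(E_0)$ is exactly the composite of the paper's chain (\ref{nalga})), then normalize by an affinity making $L\perp\Gamma$ and the central section round, and conclude that the projections are congruent ellipsoids of revolution about $A(L)$. Your third and fourth paragraphs spell out, via the principal-axis and slice-reconstruction argument, the final step that the paper leaves implicit, which is a welcome addition but not a different method.
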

\begin{proof}
                    For all $u\in (\mathbb{S}^{n-1} \cap \Gamma)$, let $K':=\Phi_u(K)$, 
                    $\Gamma':=\Phi_u(\Gamma)$. Using (\ref{jacki}) and (\ref{barbi}) of Lemma \ref{u2}, 
                    we have
\begin{eqnarray}\label{nalga}
\Phi_u(\Gamma \cap \bd K)=\Phi_u(S\partial(K,L))=S\partial(K',L)=\Gamma'\cap K'.    
\end{eqnarray} 
                    It follows that the $(n-1)$-section $\Gamma \cap \bd K$ is an ellipsoid because all 
                    its orthogonal projection are $(n-2)$-ellipsoids (notice that, by virtue of Lemma 
                    \ref{chiquita}, $\Gamma'\cap K'$ is a $(n-2)$-ellipsoid). Now we choose an affine 
                    transformation $A:\Rn \rightarrow \Rn$ such that  $A(\Gamma)=\Gamma$, 
                    $A(L)\perp \Gamma$, and $A(\Gamma \cap \bd K)$ is a sphere of radius $R$. Thus, 
                    for all $u\in (\mathbb{S}^{n-1} \cap \Gamma)$, $\Phi_u(A(K))$ is an 
                    ellipsoid of revolution with axis $A(L)$ (with the maximum radius of the spheres 
                     perpendicular to $A(L)$ equal to $R$).
\end{proof}
                    \textbf{Proof of Theorem \ref{mozart} in dimension $>3$.}
                     Let $\Pi$ be a hyperplane that separates $B$ from $O$. By Lemma \ref{chiquita}, 
                     for every vector $u\in \mathbb{S}^{n-1}$ parallel to $\Pi$ the projection $\Phi_u(K)$ 
                     is an $(n-1)$-ellipsoid. Let $\Gamma$ be a hyperplane parallel to $\Pi$ and with 
                     $O\in \Gamma$, let $\Pi_1,\Pi_2$ be support hyperplanes of $K$ at the points 
                     $x_1,x_2\subset \bd K$, respectively, parallel to $\Pi$ and let $L:=L(x_1, x_2)$. By 
                     Lemma \ref{teta}, there exists an affine transformation $A:\Rn \rightarrow \Rn$ such 
                    that $A(\Gamma)=\Gamma$, $A(L)\perp \Gamma$ and the ellipsoids 
                    $\{\Phi_u(A(K)): u\in (\mathbb{S}^{n-1}\cap \Gamma) \}$ are congruent ellipsoids of 
                    revolution with axis $A(L)$. It follows that $A(K)$ is an ellipsoid of revolution with axis 
                    $A(L)$.  Hence $K$ is an ellipsoid.
                    
                    \textbf{Acknowledment}. I want to thank the referee for his valuable observations and 
                    Su\-gges\-tions, which essentially allowed me to improve the manuscript, and to my dear 
                    friend to Jes\'us Jeronimo-Castro for His comments about this work.

            \textbf{Author Contributions.} Material preparation were performed by Efr\'en Morales 
           Amaya. Efr\'en Morales Amaya wrote the first draft of the manuscript.  
           
          \textbf{Funding.} The authors declare that no funds, grants, or other support were received 
          during the preparation of this manuscript.

          \textbf{Data Availability.} Data sharing not applicable to this article, as no datasets were 
          generated or analyzed during the current study.
          
          \textbf{Declarations.}
          
           \textbf{Conflict of interest.} The authors have no relevant financial or non-financial interests to disclose.

\end{document}